\documentclass[12pt,a4paper]{article}
\usepackage[latin1]{inputenc}
\usepackage{amsmath}
\usepackage{appendix}
\usepackage{multirow}
\usepackage{url}
\usepackage{graphicx}
\usepackage[all]{xy}
\usepackage{amsfonts}
\date{}
\usepackage{amsthm}
\usepackage{amssymb}
\theoremstyle{plain}

\newtheorem{theorem}{Theorem}[section]

\newtheorem{corollary}{Corollary}[section]
\newtheorem{example}{Example}[section]
\numberwithin{equation}{section}
\usepackage{fullpage}
\begin{document}

\begin{center} {\bf Modulo 2 congruences for partitions with initial repetitions}\end{center}
\begin{center}
 Darlison Nyirenda$^{1}$
and
Beaullah Mugwangwavari$^{2}$
 \vspace{0.5cm} \\
$^{1}$  The John Knopfmacher Centre for Applicable Analysis and Number Theory, 	University of the Witwatersrand, P.O. Wits 2050, Johannesburg, South Africa.\\
$^{2}$ School of Mathematics, University of the Witwatersrand, P. O. Wits 2050, Johannesburg, South Africa.\\
e-mails: darlison.nyirenda@wits.ac.za, 712040@students.wits.ac.za\\
\end{center}
\begin{abstract}
Motivated by Andrews' partitions with initial repetitions, we derive parity formulas for several functions for this class of partitions. In many cases, we present an infinite family of Ramanujan-like congruences modulo 2.
\end{abstract}

\section{Introduction}\label{sec1}
A partition of $n$ is a representation  $\lambda = \lambda_{1} + \lambda_{2} + \cdots + \lambda_{s}$ where $\lambda_i$'s are positive integers with $\lambda_1 \geq \lambda_{2} \geq \cdots \geq \lambda_{s}\geq 1$  and $\sum\limits_{i = 1}^{s}\lambda_i  = n$.  The summand $\lambda_i$ is called a part of $\lambda$, and the number of times this summand appears is called its multiplicity. Mostly, we use the multiplicity notation in which different parts appear with their multiplicities. We may write
$\lambda = (\mu_1^{m_1}, \mu_{2}^{m_2}, \ldots, \mu_{\ell}^{m_{\ell}})$ in which $m_i$ is the multiplicity of the part $\mu_i$ and $\mu_1 > \mu_2 > \cdots > \mu_{\ell}$.
\noindent At times, restrictions are made on $\mu_i$'s and $m_i$'s. In this case, we are said to have restricted partitions which induce restricted partition functions. For  more examples on this subject, one can refer to \cite{DN}. One interesting example is the case of partitions with initial repetitions, introduced by George Andrews \cite{initial}. Andrews defined a partition with initial $k$-repetitions as one in which if $j$ appears at least $k$ times, all positive integers less than $ j$ appear at least $k$ times. This definition entails that all parts greater than $j$ have their multiplicites strictly less than $k$. Using generating functions, Andrews proved that the number of partitions of $n$ with initial  $k$-repetitions is equal to the number of partitions of $n$ into parts with multiplicies not more than $2k - 1$.  A bijective proof of this identity was established by W. Keith \cite{keith}, and later, a simpler version thereof was given in \cite{nyirenda}. \\
\noindent Much of the work in this area has dwelt on partition identities and combinatorial bijections.  In this paper, we study a class of partitions with initial repetitions  with respect to parity. We derive modulo 2 congruences for partition functions associated with partitions in this category. We recall the following notation:\\
For $a, q \in \mathbb{C}$ and $n$ a positive integer,  $(a;q)_{n} = (1 - q)(1 - aq)(1 - aq^{2})\cdots (1 - aq^{n - 1})$  and 
$(a;q)_{0} = 1$.  If $\vert q\vert  < 1$, we have $(q;q)_{\infty} = \prod\limits_{n = 0}^{\infty}(1 - aq^{n})$ and so 
$$ (a;q)_{n} = \frac{(a;q)_{\infty}}{(aq^{n};q)_{\infty}}.$$
Throughout our discussion, we assume that $\vert q\vert  < 1$.  Some of the $q$-identities  which will be useful include:
 \begin{equation}\label{jacobi}
\sum\limits_{n = -\infty}^{\infty}z^{n}q^{n(n+1)/2} = \prod\limits_{n = 1}^{\infty}( 1 - q^{n})(1 + zq^{n})(1 + z^{-1}q^{n - 1}) 
\end{equation} for $z \neq 0$.
Other useful $q$-identities:
\begin{equation}\label{gauss}
\sum_{n = -\infty}^{\infty}(-1)^{n}q^{n^2} = \prod_{n = 1}^{\infty}\frac{1 - q^{n}}{1 + q^{n}},
\end{equation}
\begin{equation}\label{cauchy}
\sum_{n = 0}^{\infty}\frac{q^{n^2 - n} z^n}{(q;q)_{n}(z;q)_n} = \prod_{n = 0}^{\infty}(1 - zq^{n})^{-1},
\end{equation}
\begin{equation}\label{cube}
\sum_{n = 0}^{\infty}(-1)^{n}(2n + 1)q^{n(n+1)/2}= \prod_{n = 1}^{\infty}(1 - q^{n})^{3}.
\end{equation}
The results in this paper arise by examining the following identities of Rogers-Ramanujan type due to Slater (see \cite{slater}):
\begin{equation}\label{eq7}
\prod\limits_{n = 1}^{\infty} (1-q^{n}) \sum\limits_{n = 0}^{\infty} \frac{q^{n(n+1)}}{(q^{2};q^{2})_{n}} = \prod\limits_{n = 1}^{\infty}( 1 - q^{4n})(1 - q^{4n-1})(1 - q^{4n - 3}),
\end{equation}
\begin{equation}\label{eq18}
\prod\limits_{n = 1}^{\infty} (1-q^{n}) \sum\limits_{n = 0}^{\infty} \frac{q^{n^{2}}}{(q;q)_{n}} = \prod\limits_{n = 1}^{\infty}( 1 - q^{5n})(1 - q^{5n-2})(1 - q^{5n - 3}),
\end{equation}
\begin{equation}\label{eq23}
\prod\limits_{n = 1}^{\infty} \frac{(1-q^{2n})}{(1-q^{2n-1})} \sum\limits_{n = 0}^{\infty} \frac{(-1)^{n}q^{n^{2}}}{(q^{2};q^{2})_{n}} = \prod\limits_{n = 1}^{\infty}( 1 - q^{6n})(1 - q^{6n-2})(1 - q^{6n - 4}),
\end{equation}
\begin{equation}\label{eq27}
\prod\limits_{n = 1}^{\infty} (1-q^{2n}) \sum\limits_{n = 0}^{\infty} \frac{q^{2n(n+1)}(-q;q^{2})_{n}}{(q;q^{2})_{n+1}(q^{4};q^{4})_{n}} = \prod\limits_{n = 1}^{\infty}( 1 - q^{6n})(1 + q^{6n-1})(1 + q^{6n - 5}),
\end{equation}
\begin{equation}\label{eq29}
\prod\limits_{n = 1}^{\infty} \frac{(1-q^{2n})}{(1+q^{2n-1})} \sum\limits_{n = 0}^{\infty} \frac{q^{n^{2}}(-q;q^{2})_{n}}{(q;q)_{2n}} = \prod\limits_{n = 1}^{\infty}( 1 - q^{6n})(1 + q^{6n-2})(1 + q^{6n - 4}),
\end{equation}
\begin{equation}\label{eq31}
\prod\limits_{n = 1}^{\infty} (1-q^{2n}) \sum\limits_{n = 0}^{\infty} \frac{q^{2n(n+1)}}{(q^{2};q^{2})_{n}(-q;q)_{2n+1}} = \prod\limits_{n = 1}^{\infty}( 1 - q^{7n})(1 - q^{7n-1})(1 - q^{7n - 6}),
\end{equation}
\begin{equation}\label{eq32}
\prod\limits_{n = 1}^{\infty} (1-q^{2n}) \sum\limits_{n = 0}^{\infty} \frac{q^{2n(n+1)}}{(q^{2};q^{2})_{n}(-q;q)_{2n}} = \prod\limits_{n = 1}^{\infty}( 1 - q^{7n})(1 - q^{7n-2})(1 - q^{7n - 5}),
\end{equation}
\begin{equation}\label{eq33}
\prod\limits_{n = 1}^{\infty} (1-q^{2n}) \sum\limits_{n = 0}^{\infty} \frac{q^{2n^{2}}}{(q^{2};q^{2})_{n}(-q;q)_{2n}} = \prod\limits_{n = 1}^{\infty}( 1 - q^{7n})(1 - q^{7n-3})(1 - q^{7n - 4}),
\end{equation}
\begin{equation}\label{eq36}
\prod\limits_{n = 1}^{\infty} \frac{(1-q^{2n})}{(1+q^{2n+1})} \sum\limits_{n = 0}^{\infty} \frac{q^{n^{2}}(-q;q^{2})_{n}}{(q^{2};q^{2})_{n}} = \prod\limits_{n = 1}^{\infty}( 1 - q^{8n})(1 - q^{8n-3})(1 - q^{8n - 5}),
\end{equation}
\begin{equation}\label{eq50}
\prod\limits_{n = 1}^{\infty} (1-q^{n}) \sum\limits_{n = 0}^{\infty} \frac{q^{n(n+2)}(-q;q^{2})_{n}}{(q;q)_{2n+1}} = \prod\limits_{n = 1}^{\infty}( 1 - q^{12n})(1 - q^{12n-2})(1 - q^{12n - 10}).
\end{equation}
\noindent We present our results in Section 2.
\section{Modulo 2 congruences}
\noindent Unless otherwise specified, all congruence equations involving $q$-series are taken modulo 2. So the statement $a \equiv b$, where $a$ and $b$ are two expressions involving $q$-series, shall mean  $a \equiv b \pmod{2}$. We start by investigating a variation of Andrews' partitions with initial 2-repetitions as follows: \\
\noindent Let $c_{1}(n)$ be the number of partitions of $n$ in which  either
\begin{enumerate}
\item[(a)] all parts are distinct \\
or
\item[(b)] there is an odd repeated part $2j - 1$  and all positive integers less than $2j - 1$ appear as repeated parts, any part greater than $2j$ is distinct. 
\end{enumerate}
Then, we do get the following parity formula for $c_{1}(n)$.
\begin{theorem}\label{och}
For all $n\geq0$,
$$c_{1}(n) \equiv 1 \pmod{2} \iff n = j(j + 1)/2, j \geq 0.$$
\end{theorem}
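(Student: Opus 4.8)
The plan is to pass to generating functions, reduce modulo $2$, and match the outcome against Slater's identity \eqref{eq7} together with the Jacobi triple product \eqref{jacobi}. First I would encode the class in a single parameter. A partition is counted by $c_{1}(n)$ precisely when its set of repeated parts is an initial segment $\{1,2,\dots,r\}$ for some $r\geq 0$: case (a) is $r=0$, while in case (b) the largest repeated part is either the odd number $2j-1$ (when the part $2j$ is not repeated) or the even number $2j$ (when $2j$ is repeated), which is why the statement names only the odd threshold yet leaves the part $2j$ unconstrained. For a fixed $r$, the parts $1,\dots,r$ each occur at least twice while every part exceeding $r$ is distinct, so the generating function for these partitions is
\begin{equation*}
\prod_{i=1}^{r}\frac{q^{2i}}{1-q^{i}}\;\prod_{i>r}\bigl(1+q^{i}\bigr)
=\frac{q^{r(r+1)}}{(q;q)_{r}}\cdot\frac{(-q;q)_{\infty}}{(-q;q)_{r}}
=(-q;q)_{\infty}\,\frac{q^{r(r+1)}}{(q^{2};q^{2})_{r}},
\end{equation*}
using $\sum_{i=1}^{r}2i=r(r+1)$ and $(q;q)_{r}(-q;q)_{r}=(q^{2};q^{2})_{r}$. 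Summing over $r\geq 0$ gives
\begin{equation*}
\sum_{n\geq 0}c_{1}(n)q^{n}=(-q;q)_{\infty}\sum_{n=0}^{\infty}\frac{q^{n(n+1)}}{(q^{2};q^{2})_{n}}.
\end{equation*}

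Next I would reduce modulo $2$. In $\mathbb{F}_{2}[[q]]$ one has $1+q^{i}=1-q^{i}$, so $(-q;q)_{\infty}\equiv (q;q)_{\infty}\pmod 2$, and therefore
\begin{equation*}
\sum_{n\geq 0}c_{1}(n)q^{n}\equiv \prod_{n=1}^{\infty}(1-q^{n})\sum_{n=0}^{\infty}\frac{q^{n(n+1)}}{(q^{2};q^{2})_{n}}\pmod 2.
\end{equation*}
The right-hand side is exactly the left-hand side of Slater's identity \eqref{eq7}, hence it equals $\prod_{n\geq1}(1-q^{4n})(1-q^{4n-1})(1-q^{4n-3})$.

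Finally I would expand this product via \eqref{jacobi}. Replacing $q$ by $q^{4}$ and taking $z=-q^{-1}$ collapses the three factors into $\sum_{n=-\infty}^{\infty}(-1)^{n}q^{n(2n+1)}$. As $n$ ranges over $\mathbb{Z}$ the exponents $n(2n+1)$ run over every triangular number $j(j+1)/2$ exactly once (nonnegative $n$ producing the even-indexed triangular numbers, negative $n$ the odd-indexed ones), so modulo $2$ the signs vanish and the product is congruent to $\sum_{j\geq 0}q^{j(j+1)/2}$. Comparing coefficients gives $c_{1}(n)\equiv 1\pmod 2$ if and only if $n=j(j+1)/2$.

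The product manipulations and the triple-product specialization are routine; the step needing the most care is the first one, namely verifying that cases (a) and (b) together enumerate, without repetition, exactly the partitions whose repeated parts form an initial segment $\{1,\dots,r\}$, and in particular that permitting the part $2j$ to repeat is what supplies the even values of $r$. This bookkeeping is essential, since it is the full series $\sum_{n\geq0}q^{n(n+1)}/(q^{2};q^{2})_{n}$ over both parities of $r$, rather than the odd terms alone, that aligns with \eqref{eq7} and ultimately with the triangular numbers.
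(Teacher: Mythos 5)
Your proof is correct, but it takes a genuinely different route from the paper's. Your combinatorial step --- decomposing by the exact set $\{1,\dots,r\}$ of repeated parts, with $r$ ranging over all nonnegative integers --- is sound: the parameter $j$ in case (b) is unique for each partition, and splitting the paper's term for the threshold $2j-1$ according to whether $2j$ is repeated recovers precisely your terms $r=2j-1$ and $r=2j$. This yields $(-q;q)_{\infty}\sum_{n\geq 0}q^{n(n+1)}/(q^{2};q^{2})_{n}$, whereas the paper follows its definition literally and obtains the (equal) series $(-q;q)_{\infty}\sum_{n\geq 0}q^{4n^{2}-2n}/(q^{2};q^{2})_{2n}$, indexed by the odd threshold with the part $2j$ left unconstrained. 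From there the arguments diverge: the paper evaluates its sum \emph{exactly} via Cauchy's identity \eqref{cauchy} (with $q\mapsto q^{4}$, $z=q^{2}$), arriving at the exact identity $\sum c_{1}(n)q^{n}=(q^{4};q^{4})_{\infty}/(q;q)_{\infty}$ --- so $c_{1}(n)$ equals the number of partitions of $n$ into parts not divisible by $4$ --- and only then reduces mod $2$ to $(q;q)_{\infty}^{3}$ and finishes with \eqref{cube}; you reduce mod $2$ immediately, invoke Slater's \eqref{eq7}, and finish with the triple product \eqref{jacobi} specialized at $q\mapsto q^{4}$, $z=-q^{-1}$. The paper's route buys a stronger exact product formula along the way; your route buys uniformity with the machinery used in the rest of the paper and, notably, makes visible that $c_{1}(n)\equiv c_{10}(n)\pmod{2}$, since your mod-$2$ computation is exactly the one the paper performs for $c_{10}$, and your observation that $\{n(2n+1):n\in\mathbb{Z}\}$ enumerates the triangular numbers without repetition is precisely the translation between the two theorems' statements ($n=2j^{2}+j$, $j\in\mathbb{Z}$, versus $n=j(j+1)/2$, $j\geq 0$).
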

\begin{proof}
\begin{align*}
\sum\limits_{n = 0}^{\infty}c_{1}(n)q^{n} & = \prod\limits_{j = 1}^{\infty}(1 + q^{j}) + \sum\limits_{n = 1}^{\infty} \frac{q^{1 + 1 + 2 + 2 + \ldots + (2n - 1) + (2n - 1)}}{(q;q)_{2n}}\prod\limits_{j = 2n + 1}^{\infty}(1 + q^{j}) \\
& = \sum\limits_{n = 0}^{\infty} \frac{q^{1 + 1 + 2 + 2 + \ldots + (2n - 1) + (2n - 1)}}{(q;q)_{2n}}\prod\limits_{j = 2n + 1}^{\infty}(1 + q^{j}) \\
                                      & = \sum\limits_{n = 0}^{\infty} \frac{q^{4n^2 - 2n}}{(q;q)_{2n}}\frac{\prod\limits_{j = 1}^{\infty}(1 + q^{j})}{\prod\limits_{j = 1}^{2n}(1 + q^{j})}\\
                                      & = (-q;q)_{\infty}\sum\limits_{n = 0}^{\infty} \frac{q^{4n^2 - 2n}}{(q;q)_{2n}(-q;q)_{2n}} \\
                                      & = (-q;q)_{\infty}\sum\limits_{n = 0}^{\infty} \frac{q^{4n^2 - 2n}}{(q^{2};q^{2})_{2n}} \\
                                      & = (-q;q)_{\infty}\sum\limits_{n = 0}^{\infty} \frac{q^{4n^2 - 2n}}{(q^{4};q^{4})_{n}(q^{2};q^{4})_{n}}\\
                                      & = (-q;q)_{\infty}\prod\limits_{n = 0}^{\infty}\frac{1}{1 - q^{2 + 4n}}\,\,\,\,\,\, (z = q^{2}, q: = q^{4}\,\,\,\, \text{in}\,\, \eqref{cauchy}) \\
                                      & = \frac{(-q;q^{2})_{\infty}(-q^{2};q^{2})_{\infty}}{(-q;q^{2})_{\infty}(q;q^{2})_{\infty}} \\
                                      & = \frac{(q^{4};q^{4})_{\infty}}{(q;q)_{\infty}} \end{align*}
\begin{align*}
                                      &  \equiv (q;q)^{3}\\
                                      & \equiv \sum_{n  = 0}^{\infty}q^{n(n + 1)/2}.
\end{align*}
\end{proof}

\noindent Let $c_{2}(n)$ be the number of partitions of $n$ in which there exists $j \geq 1$ such that $j$ appears exactly $j$ times and it is the only part less than $2j + 1$, even parts $\geq 2j + 2$ are distinct, odd parts $\geq 2j + 1$ appear unrestricted.  Then we have the following theorem.
\begin{theorem}
For all $n\geq 0$,
$$c_{2}(5n + 2) \equiv 0 \pmod{2}.$$
\end{theorem}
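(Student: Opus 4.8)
The plan is to mirror the strategy of Theorem \ref{och}: produce a generating function for $c_2(n)$, collapse it to an infinite product by means of one of the listed Rogers--Ramanujan--Slater identities, reduce the product modulo $2$, and finally read off the coefficients in the arithmetic progression $5n+2$. Since the target modulus is $5$, the only identity in the list that can possibly be responsible is \eqref{eq18}, so the entire computation should be organized so as to expose its left-hand side $\sum_{n\ge0}q^{n^2}/(q;q)_n$.

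First I would build the generating function. Fixing the threshold $j$, the odd parts $1,3,\dots,2j-1$ each used once contribute $q^{1+3+\cdots+(2j-1)}=q^{j^2}$, the unrestricted odd parts $\ge 2j+1$ contribute $1/(q^{2j+1};q^2)_\infty$, and the distinct even parts $>2j$ contribute $(-q^{2j+2};q^2)_\infty$. The delicate point is to pin down a unique $j$ for each admissible partition, so that summing over $j$ does not over-count (a partition such as $3+1$ formally fits several thresholds); once a canonical $j$ is fixed one is led to
\begin{align*}
\sum_{n\ge0}c_2(n)q^n=\sum_{j\ge0}q^{j^2}\frac{(-q^{2j+2};q^2)_\infty}{(q^{2j+1};q^2)_\infty}
=\frac{(-q^2;q^2)_\infty}{(q;q^2)_\infty}\sum_{j\ge0}q^{j^2}\frac{(q;q^2)_j}{(-q^2;q^2)_j},
\end{align*}
after pulling the $j$-independent products out front.

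Next I would massage the remaining basic hypergeometric sum into the shape of the left-hand side of \eqref{eq18}. Using $(q;q^2)_j=(q;q)_{2j}/(q^2;q^2)_j$ and $(-q^2;q^2)_j=(q^4;q^4)_j/(q^2;q^2)_j$ rewrites the summand, and, combined with the prefactor, this should let one invoke \eqref{eq18} to replace the whole series by $\Pi_5:=\prod_{n\ge1}(1-q^{5n})(1-q^{5n-2})(1-q^{5n-3})$ times an explicit quotient of infinite products. Most of the $q$-series bookkeeping lives here; alternatively one may try to re-derive the generating function directly in the form $(\text{product})\cdot\sum_{n\ge0}q^{n^2}/(q;q)_n$ by a cleaner combinatorial decomposition, bypassing the transformation of the sum.

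Finally comes the arithmetic. Modulo $2$ I would use $1+q^k\equiv 1-q^k$ to turn every $(-a;\cdot)$ product into the corresponding $(a;\cdot)$ product, and $(q;q)_\infty^2\equiv(q^2;q^2)_\infty$ to collapse the leftover quotient into a single power of $(q;q)_\infty$; simultaneously \eqref{jacobi} converts $\Pi_5$ into the theta series $\sum_{k}(-1)^k q^{k(5k+1)/2}\equiv\sum_k q^{k(5k+1)/2}\pmod 2$. The exponents $k(5k+1)/2$ realize the residues $0,3,1,4,2$ as $k\equiv 0,1,2,3,4\pmod 5$, so the class $2\pmod 5$ is reached only from $k\equiv 4\pmod5$. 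The main obstacle is precisely that $\Pi_5$ alone does \emph{not} kill the progression $5n+2$, so one must $5$-dissect the full convolution of this theta series against the surviving power of $(q;q)_\infty$ (itself a pentagonal-number theta series) and show that the residue-$2$ contributions occur an even number of times, i.e.\ cancel in pairs modulo $2$. Establishing this cancellation is the crux; once the generating function has been reduced to the product form, the steps leading to it are essentially formal.
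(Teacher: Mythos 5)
Your proposal fails at its central step, and the failure traces back to the premise that ``since the target modulus is $5$, the only identity that can possibly be responsible is \eqref{eq18}.'' That heuristic is wrong here. Follow your own algebra one step further: modulo $2$ every factor $1-q^{2i+1}$ may be replaced by $1+q^{2i+1}$ and vice versa, so
\begin{equation*}
\sum_{j\ge 0}q^{j^2}\frac{(q;q^2)_j}{(-q^2;q^2)_j}\;\equiv\;\sum_{j\ge 0}\frac{q^{j^2}(-q;q^2)_j}{(q^2;q^2)_j}\pmod 2,
\end{equation*}
which is exactly the sum appearing in \eqref{eq36}, the \emph{modulus-eight} Slater identity --- not the sum $\sum_j q^{j^2}/(q;q)_j$ of \eqref{eq18}. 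These two series are genuinely different even modulo $2$: their coefficients of $q^6$ are $2$ and $3$ respectively, so no rewriting of Pochhammer symbols will convert one into the other. Once \eqref{eq36} is used, the generating function collapses (mod $2$) to $\prod_{n\ge1}(1-q^{8n})(1-q^{8n-3})(1-q^{8n-5})\equiv\sum_{n=-\infty}^{\infty}q^{4n^2+n}$ by \eqref{jacobi}, and since $4n^2+n\equiv 0,3,4\pmod 5$ for every integer $n$, the exponents never hit the class $2\pmod 5$; the theorem follows with no dissection whatsoever. Your insistence on \eqref{eq18} instead manufactures the convolution problem you candidly label ``the crux'' and never solve --- and that unproved cancellation is precisely the content of the theorem, so the proposal is not a proof. (The modulus-matching intuition is doubly misleading in this paper: \eqref{eq18} is used for $c_8$ to get congruences modulo $49$, while the present mod-$5$ congruence comes from the mod-$8$ identity.)

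A second, smaller error: the definition of $c_2$ requires $j\ge 1$, so the generating function is $\sum_{j\ge 1}$, not $\sum_{j\ge 0}$. The paper completes the sum to $j=0$ and subtracts the $j=0$ term $(-q^2;q^2)_\infty/(q;q^2)_\infty$, which modulo $2$ equals $(q;q)_\infty^3\equiv\sum_{n\ge 0}q^{n(n+1)/2}$ by \eqref{cube}. Thus the final congruence is a \emph{sum of two theta series},
\begin{equation*}
\sum_{n\ge 0}c_2(n)q^n\;\equiv\;\sum_{n=-\infty}^{\infty}q^{4n^2+n}+\sum_{n\ge 0}q^{n(n+1)/2}\pmod 2,
\end{equation*}
and the theorem holds because both exponent families avoid the residue $2$ modulo $5$ (triangular numbers are $\equiv 0,1,3$). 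Your $j\ge 0$ sum is the generating function of a different partition function, so even a completed argument would prove the congruence for the wrong object; fixing this costs only the bookkeeping above, but it must be done.
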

\begin{proof}
\begin{align*}
\sum_{n = 0}^{\infty} c_{2}(n)q^{n} & = \sum_{n = 1}^{\infty} \dfrac{q^{n^{2}}(-q^{2n+2};q^{2})_{\infty}}{(q^{2n+1};q^{2})_{\infty}} \\
& =   \sum_{n = 0}^{\infty} \dfrac{q^{n^{2}}(-q^{2n+2};q^{2})_{\infty}}{(q^{2n+1};q^{2})_{\infty}} -\frac{(-q^2;q^2)_{\infty}}{(q;q^{2})_{\infty}} \\
& \equiv \sum_{n = 0}^{\infty} \dfrac{q^{n^{2}}(q^{2n+2};q^{2})_{\infty}}{(q^{2n+1};q^{2})_{\infty}} + (q^{2};q^{2})_{\infty}(-q;q)_{\infty} \\
& = \sum_{n = 0}^{\infty} \dfrac{q^{n^{2}}(-q;q^{2})_{n}(q^{2};q^{2})_{\infty}}{(-q;q^{2})_{\infty}(q^{2};q^{2})_{n}}  + (q;q)_{\infty}^{3} \\
& = \dfrac{(q^{2};q^{2})_{\infty}}{(-q;q^{2})_{\infty}} \sum_{n = 0}^{\infty} \dfrac{q^{n^{2}}(-q;q^{2})_{n}}{(q^{2};q^{2})_{n}} + \sum\limits_{n = 0}^{\infty}(-1)^{n}(2n + 1)q^{n(n + 1)/2} \\
& \equiv \prod_{n=1}^{\infty} \left(1-q^{8n-3}\right)\left(1-q^{8n-5}\right)\left(1-q^{8n}\right)  + \sum\limits_{n = 0}^{\infty}q^{n(n + 1)/2}  \\
& \quad \quad (\text{by}\,\, \eqref{eq36} \,\,\text{and}\,\,\eqref{cube})\\
& \equiv \sum_{n = -\infty}^{\infty} q^{4n^{2}+n}  + \sum\limits_{n = 0}^{\infty}q^{n(n + 1)/2}\,\,\,\,(\text{by} \,\,\,\,\eqref{jacobi}).
\end{align*}
Since there is no integer $n$ such that $4n^{2} + n \equiv 2 \pmod{5}$ or $n(n + 1)/2 \equiv 2 \pmod{5}$, it must follow that
$c_{2}(5n + 2) \equiv 0 \pmod{2}$. 
\end{proof}
For instance, there are two partitions of 7  enumerated by $c_{2}(7)$. These are: $(6,1)$ and $(3^{2}, 1)$. Thus $c_{2}(7)\equiv 0 \pmod{2}$.\\ \par
\noindent Let $c_{3}(n)$ be the number of partitions of $n$ in which, there is a positive integer $j$ such that 1 appears with multiplicities $j^{2}$  or $j^{2} + 1$, odd parts $> 1$ are distinct, all even parts are distinct and those $> 2j$ are at least $4j + 4$ in size and divisible by 4, no even integer in the set $\{2j + 2, 2j + 4, \ldots, 4j + 2\}$ appears as a part. Then
\begin{align*}
\sum_{n = 0}^{\infty}c_{3}(n)q^{n} & = \sum_{n = 1}^{\infty}q^{1 + 1 + \cdots + 1 (n^{2}\,\text{times})}(1 + q)(-q^{3};q^{2})_{\infty}(-q^{2};q^{2})_{n} (-q^{4n + 4};q^{4})_{\infty}\\
& \equiv \sum_{n = 1}^{\infty}q^{n^{2}}(1 + q)(-q^{3};q^{2})_{\infty}(-q^{2};q^{2})_{n}(-q^{2n + 2};q^{2})_{\infty} (-q^{2n + 2};q^{2})_{\infty} \\
& =  \sum_{n = 1}^{\infty}q^{n^{2}}  (-q;q^{2})_{\infty}(-q^{2};q^{2})_{\infty}(-q^{2n + 2};q^{2})_{\infty}\\
& =  \sum_{n = 1}^{\infty}q^{n^{2}}(-q;q)_{\infty}(-q^{2n + 2};q^{2})_{\infty}\\
& = (-q;q)_{\infty}\sum_{n = 1}^{\infty}q^{n^{2}}\frac{(-q^{2};q^{2})_{\infty}}{q^{2};q^{2})_{n}}\\
                                                  & \equiv \frac{ (q^{2};q^{2})_{\infty}  }{(q;q^{2})_{\infty}} \sum_{n = 1}^{\infty}\frac{q^{n^{2}}}{(q^{2};q^{2})_{n}}  \\
& \equiv  \frac{(q^{2};q^{2})_{\infty}}{(q;q^{2})_{\infty}}\sum_{n = 1}^{\infty}\frac{(-1)^{n}q^{n^{2}}}{(q^{2};q^{2})_{n}} \\
& \equiv  \frac{(q^{2};q^{2})_{\infty}}{(q;q^{2})_{\infty}}\left(\sum_{n = 0}^{\infty}\frac{(-1)^{n}q^{n^{2}}}{(q^{2};q^{2})_{n}}  - 1\right) \\
& =  \frac{(q^{2};q^{2})_{\infty}}{(q;q^{2})_{\infty}}\sum_{n = 0}^{\infty}\frac{(-1)^{n}q^{n^{2}}}{(q^{2};q^{2})_{n}}  - \frac{(q^{2};q^{2})_{\infty}}{(q;q^{2})_{\infty}} \\
& \equiv  (q^{2};q^{6})_{\infty} (q^{4};q^{6})_{\infty} (q^{6};q^{6})_{\infty} +  \frac{(q^{2};q^{2})_{\infty}}{(q;q^{2})_{\infty}}  \,\,\,(\text{by}\,\,\,\, \eqref{eq23}) \\
& = \sum_{n = -\infty}^{\infty}q^{n(3n + 1)}  +  \sum_{n = 0}^{\infty}q^{n(n + 1)/2}\,\,\,\,\,(\text{by}\,\,\,\,\eqref{jacobi}).
\end{align*}
Note that, for all $n \in \mathbb{Z}$, we have $n(3n + 1) \equiv  0,2,4 \pmod{5}$  and for all $n \in \mathbb{Z}_{\geq 0}$, $n(n+1)/2  \equiv 0,1,3 \pmod{5}$. More specifically, we have
$n(3n + 1) \equiv  0 \pmod{5}  \iff  n \equiv 0,3 \pmod{5}$,\,\,\,\, $n(3n + 1) \equiv  2 \pmod{5} \iff n \equiv 4 \pmod{5}$, \,\,\,\,$n(3n + 1) \equiv  4 \pmod{5} \iff n \equiv 1,2 \pmod{5}$,\,\,\,\, $n(n + 1)/2 \equiv  0 \pmod{5} \iff n \equiv 0, 4 \pmod{5}$,\,\,\, $n(n + 1)/2 \equiv  1 \pmod{5} \iff \,\,\, n \equiv 1, 3 \pmod{5}$  and  $n(n + 1)/2 \equiv  3 \pmod{5} \,\,\iff\,\, n \equiv 2 \pmod{5}$. \\
Hence
$$ \sum_{n = 0}^{\infty}c_{3}(5n + 1)q^{5n + 1} \equiv   \sum_{n \geq 1, n \equiv 1,3 \pmod{5}}q^{n(n + 1)/2}\pmod{2},$$
$$ \sum_{n = 0}^{\infty}c_{3}(5n + 2)q^{5n + 2} \equiv   \sum_{n \equiv 4 \pmod{5}}q^{n(3n + 1)} = \sum\limits_{n = -\infty}^{\infty} q^{(5n + 4)(15n + 13)}\pmod{2},$$
$$ \sum_{n = 0}^{\infty}c_{3}(5n + 3)q^{5n + 3} \equiv   \sum_{n > 1, n \equiv 2 \pmod{5}}q^{n(n + 1)/2} = \sum\limits_{n = 0}^{\infty} q^{(5n + 2)(5n + 3)/2}$$
and 
$$ \sum_{n = 0}^{\infty}c_{3}(5n + 4)q^{5n + 4} \equiv   \sum_{n \equiv 1, 2 \pmod{5}}q^{n(3n + 1)}$$
so that we have the following.
\begin{theorem}
For all $n\geq 0$,
\begin{equation*}
c_{3}(5n + 1) \equiv 1 \pmod{2} \,\,\,\,\text{iff}\,\,\,\, n = \frac{j(j + 1) - 2}{10},\,\,\, j \geq 1\,\,\,\text{and}\,\,\, j \equiv 1, 3 \pmod{5},
\end{equation*}
\begin{equation*}
c_{3}(5n + 2) \equiv 1\pmod{2}\,\,\,\,\text{iff}\,\,\,\, n = \frac{(5j + 4)(15j + 13) - 2}{5},\,\,\,j \in \mathbb{Z}.
\end{equation*}
\begin{equation*}
c_{3}(5n + 3) \equiv 1\pmod{2}\,\,\,\,\text{iff}\,\,\,\, n = \frac{(5j + 2)(5j+ 3) - 6}{10},\,\, j \geq 0,
\end{equation*}
\begin{equation*}
c_{3}(5n + 4) \equiv 1\pmod{2}\,\,\,\,\text{iff}\,\,\,\, n = \frac{j(3j + 1) - 4}{5},\,\, j \in \mathbb{Z}\,\,\,\text{and}\,\,\, j \equiv 1, 2\pmod{5}.
\end{equation*}
\end{theorem}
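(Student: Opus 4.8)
The plan is to take as given the four dissected congruences displayed immediately above, which arise by splitting the established identity
$$\sum_{n=0}^{\infty} c_{3}(n)q^{n} \equiv \sum_{j=-\infty}^{\infty} q^{j(3j+1)} + \sum_{j=0}^{\infty} q^{j(j+1)/2} \pmod{2}$$
according to the residue of the exponent modulo $5$, and then to convert each of them into the claimed closed form for $n$. (I rename the summation index $j$ to keep it distinct from the progression index $n$.) The structural fact that makes this work is that both exponent maps are injective: $j \mapsto j(j+1)/2$ is strictly increasing on $j \geq 0$, while $j \mapsto j(3j+1) = 3j^{2}+j$ is injective on $\mathbb{Z}$ because $3j^{2}+j = 3k^{2}+k$ forces $(j-k)\bigl(3(j+k)+1\bigr)=0$ with the second factor never vanishing over $\mathbb{Z}$. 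Hence each theta series is multiplicity-free, so modulo $2$ the coefficient of any power of $q$ is simply the indicator that the exponent is attained.

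The second ingredient is the residue separation recorded just before the theorem: over $\mathbb{Z}$ one has $j(3j+1)\equiv 0,2,4 \pmod 5$ only, while $j(j+1)/2 \equiv 0,1,3 \pmod 5$ only. Thus in each of the classes $1,2,3,4$ exactly one of the two series can contribute. I would first confirm, by a direct check on the five residues of $j$, that $j(j+1)/2\equiv 1 \pmod 5$ exactly when $j\equiv 1,3$, that $j(j+1)/2\equiv 3$ exactly when $j\equiv 2$, that $j(3j+1)\equiv 2$ exactly when $j\equiv 4$, and that $j(3j+1)\equiv 4$ exactly when $j\equiv 1,2 \pmod 5$. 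Combined with multiplicity-freeness, this yields that $c_{3}(5n+r)$ is odd precisely when $5n+r$ equals one of the attained exponents of the single series governing class $r$.

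It then remains to solve $5n+r = (\text{exponent})$ for $n$ in each case. For $r=1$, writing $5n+1=j(j+1)/2$ gives $n=(j(j+1)-2)/10$ with $j\equiv 1,3\pmod 5$; for $r=3$, substituting $j=5k+2$ into $5n+3=j(j+1)/2$ gives $n=\bigl((5k+2)(5k+3)-6\bigr)/10$; for $r=2$, substituting $j=5k+4$ into $5n+2=j(3j+1)$ gives $n=\bigl((5k+4)(15k+13)-2\bigr)/5$; and for $r=4$, writing $5n+4=j(3j+1)$ gives $n=(j(3j+1)-4)/5$ with $j\equiv 1,2\pmod 5$. Each reproduces one line of the theorem after renaming the index, and the stated ranges ($j\geq 1$, $k\geq 0$, or $j\in\mathbb{Z}$) are pinned down by the smallest admissible index in the corresponding series.

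The only genuine work, and the step I expect to need the most care, is the residue bookkeeping of the second paragraph: not merely which residues the two quadratic forms hit, but exactly which congruence classes of $j$ produce each target class, together with the verification that there is no cross-contribution between the two series in classes $1,2,3,4$, so that no parity cancellation can occur. Once that finite check is secured, injectivity upgrades ``attained'' to ``attained with odd multiplicity,'' and the closed forms follow by the elementary substitutions above. The class $r=0$, where both series contribute simultaneously, is deliberately excluded, since there the clean one-series description — and hence this argument — breaks down.
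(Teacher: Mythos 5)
Your proposal is correct and follows essentially the same route as the paper: dissect the previously established congruence $\sum c_{3}(n)q^{n} \equiv \sum_{j\in\mathbb{Z}} q^{j(3j+1)} + \sum_{j\geq 0} q^{j(j+1)/2} \pmod{2}$ by residue classes modulo $5$, note that exactly one series can contribute in each of the classes $1,2,3,4$, and solve $5n+r = \text{exponent}$ for $n$. Your explicit verification that both exponent maps are injective (so each dissected series is multiplicity-free and ``attained'' really means ``odd coefficient'') is a point the paper leaves implicit, and it is a worthwhile addition rather than a deviation.
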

\noindent Recall that
$$\sum_{n  = 0 }^{\infty}c_{3}(n)q^{n} \equiv  \sum_{n = -\infty}^{\infty}q^{n(3n + 1)}  +  \sum_{n = 0}^{\infty}q^{n(n + 1)/2}.$$
\noindent Observe that none of the exponents $n(3n + 1)$ or  $n(n + 1)/2$ is congruent to 5, 7, 9 modulo 11. Thus, we have:

\begin{theorem}\label{same0}
For all $n\geq 0$,
\begin{equation*}
c_{3}(11n + 5) \equiv 0 \pmod{2},
\end{equation*}
\begin{equation*}
c_{3}(11n + 7) \equiv 0 \pmod{2},
\end{equation*}
\begin{equation*}
c_{3}(11n + 9) \equiv 0 \pmod{2}.
\end{equation*}
\end{theorem}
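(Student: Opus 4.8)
The plan is to exploit the parity generating function for $c_{3}$ already established in the preceding computation, namely
$$\sum_{n=0}^{\infty} c_{3}(n) q^{n} \equiv \sum_{n=-\infty}^{\infty} q^{n(3n+1)} + \sum_{n=0}^{\infty} q^{n(n+1)/2} \pmod{2},$$
and to reduce the three claimed congruences to an elementary statement about which residues modulo $11$ are attained by the two families of exponents. Working modulo $2$, the coefficient of $q^{m}$ on the right-hand side is precisely the parity of the number of ways to write $m$ in the form $n(3n+1)$ with $n \in \mathbb{Z}$ or in the form $n(n+1)/2$ with $n \geq 0$; in particular, if $m$ admits no representation of either kind then $c_{3}(m) \equiv 0 \pmod{2}$. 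Thus it suffices to prove that no integer of the form $n(3n+1)$ or $n(n+1)/2$ is congruent to $5$, $7$, or $9$ modulo $11$.

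The first step is to note that both quantities are integer-valued polynomials in $n$, so each is periodic modulo $11$: the value $n(3n+1) \bmod 11$ depends only on $n \bmod 11$, and since $2^{-1} \equiv 6 \pmod{11}$ we may write $n(n+1)/2 \equiv 6\,n(n+1) \pmod{11}$, which likewise depends only on $n \bmod 11$. I would then tabulate both residues as $n$ runs over the complete residue system $\{0,1,\ldots,10\}$. A direct computation shows that $n(3n+1)$ realises exactly the residues $\{0,2,3,4,8,10\}$ modulo $11$, while $n(n+1)/2$ realises exactly the residues $\{0,1,3,4,6,10\}$. Neither of these sets contains $5$, $7$, or $9$.

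Combining the two observations, the coefficient of each of $q^{11n+5}$, $q^{11n+7}$, and $q^{11n+9}$ in the right-hand series vanishes modulo $2$ for every $n \geq 0$, which is exactly the three stated congruences. I do not expect any genuine obstacle here: once the parity generating function is in hand the argument is a finite residue check, and the only point requiring a word of care is the validity of the period-$11$ reduction for the triangular numbers $n(n+1)/2$, which holds because $11$ is odd and hence division by $2$ is a well-defined bijection modulo $11$.
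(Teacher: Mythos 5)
Your proposal is correct and follows exactly the paper's route: it invokes the previously derived parity identity $\sum_{n\geq 0} c_{3}(n)q^{n} \equiv \sum_{n\in\mathbb{Z}} q^{n(3n+1)} + \sum_{n\geq 0} q^{n(n+1)/2} \pmod{2}$ and then verifies that neither exponent family meets the residues $5,7,9$ modulo $11$, which is precisely the observation the paper makes. Your residue tables ($\{0,2,3,4,8,10\}$ for $n(3n+1)$ and $\{0,1,3,4,6,10\}$ for $n(n+1)/2$) are accurate, and your remark justifying the period-$11$ reduction of $n(n+1)/2$ via $2^{-1}\equiv 6 \pmod{11}$ simply makes explicit a step the paper leaves implicit.
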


\noindent Let $c_{4}(n)$ denote the number of partitions of $n$ in which  either
\begin{enumerate}
\item[(a)] all parts are even and distinct \\
or
\item[(b)] there is an even part $2j$ which appears twice, all positive even integers $< 2j$ appear twice, any even part larger than $2j$ is actually $\geq 4j + 2$ and distinct, odd parts are distinct and at most $2j - 1$ in part size. 
\end{enumerate}
Then we have the following theorem.
\begin{theorem}\label{part32}
For all $n\geq 0$,
$$ c_{4}(n)
\equiv \begin{cases}
1 \pmod{2}, & n = (7j^2 + 3j)/2, j \in \mathbb{Z};\\\\
0 \pmod{2},& \text{otherwise}.
\end{cases}
$$
\end{theorem}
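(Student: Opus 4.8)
The plan is to follow the template of the preceding theorems: assemble the generating function $\sum_{n\ge 0}c_4(n)q^n$ from the combinatorial description, reduce it modulo $2$ to the left-hand side of one of Slater's identities, and then extract the parity from a theta expansion via \eqref{jacobi}. First I would translate the two alternatives into generating functions. Alternative (a) contributes $(-q^2;q^2)_\infty$. For a fixed $j\ge 1$, alternative (b) contributes
\[
q^{2j(j+1)}\,(-q;q^2)_j\,(-q^{4j+2};q^2)_\infty ,
\]
where $q^{2j(j+1)}$ records the even parts $2,4,\dots,2j$ each taken twice, $(-q;q^2)_j$ records the distinct odd parts $\le 2j-1$, and $(-q^{4j+2};q^2)_\infty$ records the distinct even parts $\ge 4j+2$. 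Writing $(-q^{4j+2};q^2)_\infty=(-q^2;q^2)_\infty/(-q^2;q^2)_{2j}$ and observing that the $j=0$ instance of the $(b)$-formula reproduces $(a)$, I expect the clean closed form
\[
\sum_{n\ge 0}c_4(n)q^n=(-q^2;q^2)_\infty\sum_{j\ge 0}\frac{q^{2j(j+1)}(-q;q^2)_j}{(-q^2;q^2)_{2j}} .
\]

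Next I would pass to congruences modulo $2$, where every sign is irrelevant, so the prefactor becomes $(q^2;q^2)_\infty$ and the summand becomes $q^{2j(j+1)}(q;q^2)_j/(q^2;q^2)_{2j}$. The single genuine computation is the Pochhammer congruence
\[
\frac{(q;q^2)_j}{(q^2;q^2)_{2j}}\equiv\frac{1}{(q^2;q^2)_j\,(q;q)_{2j}}\pmod 2 .
\]
Cross-multiplying and using the split $(q;q)_{2j}=(q;q^2)_j(q^2;q^2)_j$ reduces this to $(q;q^2)_j^2(q^2;q^2)_j^2\equiv(q^2;q^2)_{2j}\pmod 2$; the left side then collapses via the Frobenius relation $(1-x)^2\equiv 1-x^2\pmod 2$ (so $(q;q^2)_j^2\equiv(q^2;q^4)_j$ and $(q^2;q^2)_j^2\equiv(q^4;q^4)_j$) to $(q^2;q^4)_j(q^4;q^4)_j$, which is exactly $(q^2;q^2)_{2j}$. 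This is the step I expect to be the main obstacle, since it is where the precise shapes of the denominators must be matched so as to land on the correct Slater sum; everything else is bookkeeping. After this substitution (and replacing $(q;q)_{2j}$ by $(-q;q)_{2j}$, which is harmless modulo $2$) the series is precisely the left-hand side of \eqref{eq32} modulo $2$.

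Finally I would invoke \eqref{eq32} to get
\[
\sum_{n\ge 0}c_4(n)q^n\equiv\prod_{n=1}^{\infty}(1-q^{7n})(1-q^{7n-2})(1-q^{7n-5})\pmod 2 ,
\]
and then specialize \eqref{jacobi} with $q\mapsto q^7$ and $z=-q^{-2}$, which turns this product into $\sum_{n=-\infty}^{\infty}(-1)^n q^{(7n^2+3n)/2}$, congruent modulo $2$ to $\sum_{n=-\infty}^{\infty}q^{(7n^2+3n)/2}$. To finish I would note that $n\mapsto(7n^2+3n)/2$ is injective on $\mathbb{Z}$: if $(7n^2+3n)/2=(7m^2+3m)/2$ then $(n-m)\bigl(7(n+m)+3\bigr)=0$, and $7(n+m)+3\ne 0$ for integers, forcing $n=m$. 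Hence each exponent occurs exactly once, so the coefficient of $q^N$ is odd precisely when $N=(7j^2+3j)/2$ for some $j\in\mathbb{Z}$ and even otherwise, which is the asserted formula.
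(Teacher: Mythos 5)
Your proposal is correct and follows essentially the same route as the paper: the same generating function $\sum_{j\ge 0}q^{2j(j+1)}(-q;q^2)_j(-q^{4j+2};q^2)_\infty$, reduction modulo $2$, identification with the left-hand side of \eqref{eq32}, and the Jacobi triple product \eqref{jacobi} with $q\mapsto q^7$. The only (harmless) difference is in the middle step: the paper gets the denominator $(q^2;q^2)_n(-q;q)_{2n}$ by the exact factorizations $(q^2;q^2)_{2n}=(q;q)_{2n}(-q;q)_{2n}$ and $(q;q)_{2n}=(q;q^2)_n(q^2;q^2)_n$, with no need for your Frobenius-squaring congruence, while your explicit injectivity check of $j\mapsto(7j^2+3j)/2$ makes precise a point the paper leaves implicit.
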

\begin{proof}
Note that
\begin{align*}
\sum_{n \geq 0} c_{4}(n)q^{n} & = (-q^{2};q^{2})_{\infty} + \sum_{n \geq 1} q^{2 + 2 + 4 + 4 + \cdots + 2n + 2n}\prod\limits_{i=1}^{n}(1 + q^{2i - 1})\prod\limits_{j=1}^{\infty}(1 + q^{4n + 2j}) \\
                              & = \sum_{n\geq 0}q^{2n(n + 1)}(-q;q^{2})_{n}(-q^{4n + 2};q^{2})_{\infty}\\
                              & \equiv \sum_{n\geq 0}q^{2n(n + 1)}(q;q^{2})_{n}(q^{4n + 2};q^{2})_{\infty}\\
                              & = \sum_{n\geq 0}q^{2n(n + 1)}(q;q^{2})_{n}\frac{(q^{2};q^{2})_{2n}}{(q^{2};q^{2})_{2n} }(q^{4n + 2};q^{2})_{\infty}\\
                              & = \sum_{n\geq 0}q^{2n(n + 1)}(q;q^{2})_{n}\frac{(q^{2};q^{2})_{\infty}}{(q^{2};q^{2})_{2n} }\\
                              & = (q^{2};q^{2})_{\infty}\sum_{n \geq 0} q^{2n(n + 1)}\frac{(q;q^{2})_{n}}{(q;q)_{2n}(-q;q)_{2n}} \\
                              & = (q^{2};q^{2})_{\infty}\sum_{n \geq 0} \frac{q^{2n(n + 1)}}{(q^{2};q^{2})_{n}(-q;q)_{2n}} \\
                              & \equiv \prod_{n \geq 1}(1 - q^{7n})(1 + q^{7n - 2})(1 + q^{7n - 5})\quad \,\,\,(\text{by}\,\,\,\, \eqref{eq32}) \\
                              & \equiv \sum_{n = -\infty}^{\infty}q^{ (7n^2 + 3n)/2}.                                                    
\end{align*}
\end{proof}
\noindent Let $c_{5}(n)$ denote the number of partitions of $n$ in which either
\begin{enumerate}
\item[(a)] all parts are even and distinct \\
 or
\item[(b)] odd parts appear twice or thrice and are gap-free, even parts are distinct and the smallest even part is $\geq$ 2(the largest odd part) + 4. 
\end{enumerate}
Then parity of $c_{5}(n)$ can be deduced from the following.
\begin{theorem}\label{part33}
For all $n\geq 0$,
$$ c_{5}(n)
\equiv \begin{cases}
1\pmod{2}, & n = (7j^2 + j)/2, j \in \mathbb{Z};\\\\
0 \pmod{2},& \text{otherwise}.
\end{cases}
$$
\end{theorem}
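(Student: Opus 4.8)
The plan is to follow the template established in the proof of Theorem \ref{part32} and the earlier parity results: build the generating function $\sum_{n\geq 0} c_5(n)q^n$ directly from the combinatorial description, reduce it modulo $2$, massage it into the left-hand side of one of Slater's identities, and finally invoke the Jacobi triple product \eqref{jacobi} to expose the parity.

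First I would set up the generating function. For a fixed $j\geq 1$, clause (b) forces the odd parts to be exactly $1,3,\dots,2j-1$, each occurring twice or thrice; the two mandatory copies of each contribute $q^{2(1+3+\cdots+(2j-1))}=q^{2j^2}$, while the optional third copy of the part $2i-1$ contributes a factor $(1+q^{2i-1})$, giving $(-q;q^2)_j$ in total. The distinct even parts, all of size $\geq 2(2j-1)+4 = 4j+2$, contribute $(-q^{4j+2};q^2)_\infty$. Recognizing that clause (a) is precisely the $j=0$ term of this family, the two cases merge into the single sum
\[
\sum_{n\geq 0} c_5(n)q^n = \sum_{n\geq 0} q^{2n^2}(-q;q^2)_n(-q^{4n+2};q^2)_\infty,
\]
which differs from the $c_4$ generating function only in carrying $q^{2n^2}$ in place of $q^{2n(n+1)}$.

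From here the manipulation parallels the proof of Theorem \ref{part32} verbatim. Reducing modulo $2$ replaces each $(-\,\cdot\,;\,\cdot\,)$ by the corresponding $(\,\cdot\,;\,\cdot\,)$, and using $(q^{4n+2};q^2)_\infty=(q^2;q^2)_\infty/(q^2;q^2)_{2n}$ together with $(q;q)_{2n}=(q;q^2)_n(q^2;q^2)_n$ and $(q^2;q^2)_{2n}=(q;q)_{2n}(-q;q)_{2n}$ collapses the summand to
\[
\sum_{n\geq 0} c_5(n)q^n \equiv (q^2;q^2)_\infty \sum_{n\geq 0}\frac{q^{2n^2}}{(q^2;q^2)_n(-q;q)_{2n}} \pmod 2.
\]
The crucial observation is that the right-hand side is exactly the left-hand side of Slater's identity \eqref{eq33}, so it equals $\prod_{n\geq 1}(1-q^{7n})(1-q^{7n-3})(1-q^{7n-4})$. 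Applying \eqref{jacobi} with $q\mapsto q^7$ and $z=-q^{-3}$ evaluates this product as $\sum_{n=-\infty}^{\infty}(-1)^n q^{(7n^2+n)/2}$, which modulo $2$ becomes $\sum_{n=-\infty}^{\infty} q^{(7n^2+n)/2}$. Since $n(7n+1)=m(7m+1)$ factors as $(n-m)(7(n+m)+1)=0$ and so forces $n=m$ for integers, the exponents are distinct, each coefficient is odd exactly when $n=(7j^2+j)/2$, and the claimed parity formula follows.

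The routine $q$-series algebra is identical to the $c_4$ case and poses no difficulty. The two genuine decisions are, first, transcribing the verbal definition of $c_5$ into the correct product — in particular verifying that the ``twice or thrice'' condition yields the factor $q^{2n^2}(-q;q^2)_n$ and that clause (a) reappears as the $n=0$ summand so that the series starts at $n=0$ — and second, selecting \eqref{eq33} (rather than \eqref{eq31} or \eqref{eq32}) as the identity whose theta side produces the exponent $(7n^2+n)/2$. I expect pinning down the generating function, not the subsequent computation, to be the main obstacle.
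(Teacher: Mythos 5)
Your proposal is correct and matches the paper's own proof essentially step for step: the same generating function $\sum_{n\geq 0} q^{2n^2}(-q;q^2)_n(-q^{4n+2};q^2)_\infty$ (with clause (a) as the $n=0$ term), the same reduction modulo $2$ to $(q^2;q^2)_\infty\sum_{n\geq 0} q^{2n^2}/\bigl((q^2;q^2)_n(-q;q)_{2n}\bigr)$, the same appeal to Slater's identity \eqref{eq33}, and the same Jacobi triple product evaluation yielding $\sum_{n=-\infty}^{\infty} q^{(7n^2+n)/2}$. Your explicit verification that the exponents $(7n^2+n)/2$ are pairwise distinct is a detail the paper leaves implicit, but it is the same argument.
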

\begin{proof}
It is clear that $$\sum\limits_{n \geq 0}c_{5}(n)q^{n} = (-q^{2};q^{2})_{\infty} + \sum\limits_{n \geq 1} q^{1 + 1 + 3 + 3 + \ldots + 2n - 1}(-q;q^{2})_{n}(-q^{4n + 2}; q^{2})_{\infty},$$ and by a similar manipulation as in Theorem \ref{part32}, we have
\begin{align*}
\sum\limits_{n \geq 0}c_{5}(n)q^{n} & \equiv (q^{2};q^{2})_{\infty}\sum_{n \geq 0}\frac{q^{2n^{2}}}{(q^{2};q^{2})_{n}(-q;q)_{2n}}  \\
                                    & \equiv \prod_{n \geq 1}(1 - q^{7n})(1 + q^{7n - 3})(1 + q^{7n - 4})\quad (\text{by}\,\,\,\, \eqref{eq33})       \\
                                    & \equiv \sum\limits_{n = -\infty}^{\infty} q^{(7n^2 + n)/2}.                                            
\end{align*}
\end{proof}
\noindent Let $c_{6}(n)$ denote the number of partitions of $n$ in which either
\begin{enumerate}
\item[(a)] 1 is the only odd integer that may appear and even parts are distinct \\
or
\item[(b)] there is an even part $2j$ that appears twice, all positive even integers $< 2j$ appear twice, any even part larger than $2j$ is actually $\geq 4j + 2$ and distinct, odd parts are $\leq 2j + 1$  and those $\leq 2j - 1$ are distinct. 
\end{enumerate}
We have:
\begin{theorem}\label{part31}
For all $n\geq 0$,
$$ c_{6}(n)
\equiv \begin{cases}
1\pmod{2}, & n = (7j^2 + 5j)/2, j\in \mathbb{Z};\\\\
0 \pmod{2},& \text{otherwise}.
\end{cases}
$$
\end{theorem}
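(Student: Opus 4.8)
The plan is to follow exactly the strategy used in the proofs of Theorems~\ref{part32} and~\ref{part33}: read off the generating function from the combinatorial description, reduce it modulo~2 to a Slater-type sum, and then invoke the appropriate Rogers--Ramanujan identity together with the Jacobi triple product \eqref{jacobi}. The new feature here is that the part $2j+1$ is allowed to repeat, and I expect this to be precisely the ingredient that produces the ``$2n+1$'' shift in the denominator, matching \eqref{eq31} rather than \eqref{eq32} or \eqref{eq33}.

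First I would set up the generating function, observing that case (a) is the $j=0$ instance of case (b): for $j=0$ the clauses about even integers $<2j$ and odd parts $\le 2j-1$ are vacuous, the even parts must be $\ge 2$ and distinct, and the part $2j+1=1$ is unrestricted, which is exactly (a). For general $j\ge 0$ the blocks $2,4,\dots,2j$ each occurring twice contribute $q^{2j(j+1)}$; the distinct odd parts $1,3,\dots,2j-1$ contribute $(-q;q^2)_j$; the unrestricted part $2j+1$ contributes $1/(1-q^{2j+1})$; and the distinct even parts $\ge 4j+2$ contribute $(-q^{4j+2};q^2)_\infty=(-q^2;q^2)_\infty/(-q^2;q^2)_{2j}$. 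Summing over $j\ge 0$ gives
$$\sum_{n\ge 0}c_{6}(n)q^{n}=\sum_{j\ge 0}\frac{q^{2j(j+1)}(-q;q^{2})_{j}(-q^{4j+2};q^{2})_{\infty}}{1-q^{2j+1}}.$$

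Next I would reduce modulo~2, replacing each $(1-\cdot)$ by $(1+\cdot)$, so that $(-q;q^{2})_{j}\equiv(q;q^{2})_{j}$ and $(-q^{4j+2};q^{2})_{\infty}\equiv(q^{2};q^{2})_{\infty}/(q^{2};q^{2})_{2j}$. Using $(q^{2};q^{2})_{2j}=(q;q)_{2j}(-q;q)_{2j}$ and $(q;q)_{2j}=(q;q^{2})_{j}(q^{2};q^{2})_{j}$, the factor $(q;q^{2})_{j}$ cancels and leaves $1/\big((q^{2};q^{2})_{j}(-q;q)_{2j}\big)$. The crucial step is that, over $\mathbb{F}_{2}[[q]]$, $(1-q^{2j+1})(-q;q)_{2j}\equiv(1+q^{2j+1})(-q;q)_{2j}=(-q;q)_{2j+1}$, whence
$$\sum_{n\ge 0}c_{6}(n)q^{n}\equiv (q^{2};q^{2})_{\infty}\sum_{j\ge 0}\frac{q^{2j(j+1)}}{(q^{2};q^{2})_{j}(-q;q)_{2j+1}}\pmod{2}.$$

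Finally, the right-hand side is exactly the left-hand side of \eqref{eq31}, so
$$\sum_{n\ge 0}c_{6}(n)q^{n}\equiv\prod_{n\ge 1}(1-q^{7n})(1-q^{7n-1})(1-q^{7n-6})\equiv\prod_{n\ge 1}(1-q^{7n})(1+q^{7n-1})(1+q^{7n-6})\pmod{2},$$
and applying \eqref{jacobi} with $z=q^{-1}$ and $q$ replaced by $q^{7}$ yields $\sum_{n=-\infty}^{\infty}q^{(7n^{2}+5n)/2}$, giving the stated parity. I expect the only real obstacle to be the bookkeeping in the first step, namely translating ``odd parts are $\le 2j+1$ and those $\le 2j-1$ are distinct'' into the factor $(-q;q^{2})_{j}/(1-q^{2j+1})$ and checking that this $1/(1-q^{2j+1})$ is precisely what upgrades $(-q;q)_{2j}$ to $(-q;q)_{2j+1}$ modulo~2; once the sum is seen to coincide with \eqref{eq31}, the remainder is routine.
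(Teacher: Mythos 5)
Your proposal is correct and follows essentially the same route as the paper: the same generating function $\frac{(-q^{2};q^{2})_{\infty}}{1-q}+\sum_{n\geq 1}q^{2n(n+1)}\frac{(-q;q^{2})_{n}}{1-q^{2n+1}}(-q^{4n+2};q^{2})_{\infty}$, the same mod-2 reduction to $(q^{2};q^{2})_{\infty}\sum_{n\geq 0}\frac{q^{2n(n+1)}}{(q^{2};q^{2})_{n}(-q;q)_{2n+1}}$, then \eqref{eq31} and the Jacobi triple product. In fact you spell out the cancellation $(q;q)_{2n}=(q;q^{2})_{n}(q^{2};q^{2})_{n}$ and the key absorption $(1-q^{2n+1})(-q;q)_{2n}\equiv(-q;q)_{2n+1}\pmod 2$ that the paper leaves implicit as ``a similar manipulation as in the proof of Theorem \ref{part32}.''
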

\begin{proof}
The generating function for the partition function in question is 
$$\sum_{n \geq 0}c_{6}(n)q^{n} = \frac{(-q^{2};q^{2})_{\infty}}{1 - q} + \sum_{n \geq 1} q^{2 + 2 + 4 + 4 + \ldots 2n + 2n}\frac{(-q;q^{2})_{n}}{1 - q^{2n + 1}} (-q^{4n + 2};q^{2})_{\infty}.$$
However, by a similar manipulation as in the proof of Theorem \ref{part32}, we find that
\begin{align*}
\sum_{n \geq 0}c_{6}(n)q^{n} &  \equiv (q^{2};q^{2})_{\infty}\sum_{n \geq 0}\frac{q^{2n(n + 1)}}{(q^{2};q^{2})_{n}(-q;q)_{2n + 1}} \pmod{2}\\
                             & \equiv \prod_{n \geq 1}(1 - q^{7n})(1 + q^{7n - 1})(1 + q^{7n - 6}) \quad (\text{by}\,\,\,\, \eqref{eq31}) \\
                             & \equiv \sum_{n = -\infty}^{\infty} q^{(7n^2 + 5n)/2}.
\end{align*}
\end{proof}
\noindent Let $c_{7}(n)$ denote the number of partitions of $n$ in which either
\begin{enumerate}
\item[(a)] even parts are $\equiv 2 \pmod{4}$ and distinct, and 1 is the only odd integer that may appear\\
 or
\item[(b)] the largest even part $2j$ appears exactly twice if $2j \equiv 0 \pmod{4}$, and appears twice or thrice if  $2j \equiv 2 \pmod{4}$, all positive even integers  $< 2j$ and $\equiv 0 \pmod{4}$ are repeated exactly twice, and those  $< 2j$ and $\equiv 2 \pmod{4}$ appear twice or thrice, any even part larger than $2j$ that is divisible by 4 is actually at least $4j + 4$ in part size and distinct, even parts that are $> 2j$  and $\equiv 2 \pmod{4}$ are distinct and $2j + 1$  is the only odd integer that may appear. 
\end{enumerate}
The following result follows:
\begin{theorem}\label{part27}
For all $n\geq 0$,
$$ c_{7}(n)
\equiv \begin{cases}
1\pmod{2}, & n = 3j^2 + 2j, j \in \mathbb{Z};\\\\
0\pmod{2}, & \text{otherwise}.
\end{cases}
$$
\end{theorem}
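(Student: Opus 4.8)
The plan is to follow the same template as the proofs of Theorems~\ref{part32}, \ref{part33} and \ref{part31}: translate the combinatorial description of $c_{7}(n)$ into a generating function, reduce it modulo $2$ so that it coincides with the left-hand side of one of the Slater identities from the introduction, and then apply the Jacobi triple product \eqref{jacobi} to extract a theta series whose support governs the parity. The exponent $3j^{2}+2j$ pins down which identity to use. Applying \eqref{jacobi} with $z=q^{-1}$ and $q$ replaced by $q^{6}$ gives
\begin{equation*}
\sum_{n=-\infty}^{\infty}q^{3n^{2}+2n}=\prod_{n=1}^{\infty}(1-q^{6n})(1+q^{6n-1})(1+q^{6n-5}),
\end{equation*}
whose right-hand side is exactly the product side of \eqref{eq27}. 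So I expect the whole argument to funnel through \eqref{eq27}.

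First I would write the generating function as a base term (from alternative (a)) plus a sum over $n\ge 1$ (from alternative (b), with the parameter $j=n$ recording the largest repeated even part $2n$). The weight $q^{2n(n+1)}$ records the even parts $2,4,\dots,2n$ each taken at least twice, since $2(2+4+\cdots+2n)=2n(n+1)$; the distinct ``large'' even parts (those exceeding $2n$) supply a tail $(-q^{4n+2};q^{2})_{\infty}$, while the unique admissible odd part $2n+1$ supplies a factor $1/(1-q^{2n+1})$, with alternative (a) furnishing the $n=0$ contribution. The remaining multiplicity data---``exactly twice'' on the parts $\equiv 0\pmod 4$ versus ``twice or thrice'' on those $\equiv 2\pmod 4$---produce the finite Pochhammer factors. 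After replacing each $1+q^{k}$ by $1-q^{k}$ modulo $2$ and applying the standard splittings $(q;q)_{2n}(-q;q)_{2n}=(q^{2};q^{2})_{2n}$ and $(q^{2};q^{2})_{2n}=(q^{4};q^{4})_{n}(q^{2};q^{4})_{n}$, the series should collapse to
\begin{equation*}
\sum_{n\ge 0}c_{7}(n)q^{n}\equiv(q^{2};q^{2})_{\infty}\sum_{n\ge 0}\frac{q^{2n(n+1)}(-q;q^{2})_{n}}{(q;q^{2})_{n+1}(q^{4};q^{4})_{n}}\pmod 2,
\end{equation*}
namely the left-hand side of \eqref{eq27}.

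At that point the argument is mechanical. Invoking \eqref{eq27} rewrites the right-hand side as $\prod_{n\ge 1}(1-q^{6n})(1+q^{6n-1})(1+q^{6n-5})$, and the Jacobi triple product computation displayed above turns this into $\sum_{n=-\infty}^{\infty}q^{3n^{2}+2n}$. Comparing coefficients of $q^{n}$ yields $c_{7}(n)\equiv 1\pmod 2$ precisely when $n=3j^{2}+2j$ for some $j\in\mathbb{Z}$, and $c_{7}(n)\equiv 0\pmod 2$ otherwise, as claimed.

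The main obstacle is the very first step, the combinatorial-to-generating-function translation, which is noticeably more delicate here than in the earlier theorems: the even parts are split into two residue classes modulo $4$ carrying different multiplicity rules, and the single admissible odd part $2j+1$ shifts with $j$. The real work is to verify that these conditions conspire, after the reduction modulo $2$ and the Pochhammer splittings, to assemble exactly the factors $(-q;q^{2})_{n}$, $(q;q^{2})_{n+1}$ and $(q^{4};q^{4})_{n}$ of \eqref{eq27}; once the generating function is pinned to that form, the passage through \eqref{eq27} and \eqref{jacobi} is routine and identical in spirit to the preceding proofs.
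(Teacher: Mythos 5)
Your overall strategy is exactly the paper's: reduce the generating function modulo $2$ to the left-hand side of \eqref{eq27}, then apply \eqref{eq27} and the triple product \eqref{jacobi} with $q\mapsto q^{6}$, $z=q^{-1}$. That endgame is right, and your identification of \eqref{eq27} from the target exponent $3j^{2}+2j$ is correct. But the step you yourself flag as ``the real work''---translating the multiplicity rules of $c_{7}$ into a generating function and collapsing it to $(q^{2};q^{2})_{\infty}\sum_{n\ge 0}q^{2n(n+1)}(-q;q^{2})_{n}\big/\bigl((q;q^{2})_{n+1}(q^{4};q^{4})_{n}\bigr)$---is left undone, and the partial details you do give for it are wrong for this theorem. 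You assign the large even parts a uniform tail $(-q^{4n+2};q^{2})_{\infty}$; that is the structure of $c_{6}$ (Theorem \ref{part31}), not of $c_{7}$, where only the parts divisible by $4$ carry the ``at least $4j+4$'' restriction, while parts $\equiv 2\pmod 4$ exceeding $2j$ are merely distinct with no size condition. Likewise the splittings you invoke, $(q;q)_{2n}(-q;q)_{2n}=(q^{2};q^{2})_{2n}$ and $(q^{2};q^{2})_{2n}=(q^{4};q^{4})_{n}(q^{2};q^{4})_{n}$, come from the proof of Theorem \ref{part32} and never arise here.

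The missing idea is the factorization that handles the two residue classes at once: every part $\equiv 2\pmod 4$ is allowed exactly one optional extra copy---as the ``thrice'' option when it is $\le 2n$, as a distinct appearance when it is $>2n$---so all of these options pull out as a single global factor $(-q^{2};q^{4})_{\infty}$, giving
\begin{equation*}
\sum_{n\ge 0}c_{7}(n)q^{n}=(-q^{2};q^{4})_{\infty}\sum_{n\ge 0}\frac{q^{2n(n+1)}}{1-q^{2n+1}}\,(-q^{4n+4};q^{4})_{\infty},
\end{equation*}
with the $n=0$ term playing the role of your alternative (a). From here the reduction runs: replace both alternating products by their unsigned counterparts modulo $2$, write $(q^{4n+4};q^{4})_{\infty}=(q^{4};q^{4})_{\infty}/(q^{4};q^{4})_{n}$, combine $(q^{2};q^{4})_{\infty}(q^{4};q^{4})_{\infty}=(q^{2};q^{2})_{\infty}$, then insert $(q;q^{2})_{n}/(q;q^{2})_{n}$, use $(q;q^{2})_{n}\equiv(-q;q^{2})_{n}\pmod 2$ in the numerator, and absorb $(1-q^{2n+1})(q;q^{2})_{n}=(q;q^{2})_{n+1}$ in the denominator. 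That lands precisely on the left-hand side of \eqref{eq27}, after which your triple-product computation finishes the proof. As it stands, your argument is a plan rather than a proof, and the details you propose for the crucial step would not assemble the factors $(-q;q^{2})_{n}$, $(q;q^{2})_{n+1}$, $(q^{4};q^{4})_{n}$ that \eqref{eq27} requires.
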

\begin{proof}
Note that 
\begin{align*}
\sum\limits_{n\geq 0}c_{7}(n)q^{n} & = \frac{(-q^{2};q^{4})_{\infty}}{1 - q} + (-q^{2};q^{4})_{\infty}\sum_{n \geq 1}\frac{q^{2(2 + 4 + 6 + \cdots + 2n)}}{1 - q^{2n + 1}}(-q^{4n + 4};q^{4})_{\infty}\\
                            & \equiv (q^{2};q^{4})_{\infty}\sum_{n \geq 0}\frac{q^{2n(n + 1)}}{1 - q^{2n + 1}}(q^{4n + 4};q^{4})_{\infty} \\
                            & = \sum_{n \geq 0}\frac{q^{2n(n + 1)}}{1 - q^{2n + 1}}(q^{2};q^{4})_{\infty}\frac{(q^{4};q^{4})_{\infty}}{(q^{4};q^{4})_{n}} \\
                            & = \sum_{n \geq 0}\frac{q^{2n(n + 1)}}{(1 - q^{2n + 1})(q^{4};q^{4})_{n}}(q^{2};q^{2})_{\infty} \\
                            & = \sum_{n \geq 0}\frac{q^{2n(n + 1)}}{(1 - q^{2n + 1})}\frac{(q;q^{2})_{n}}{(q;q^{2})_{n}(q^{4};q^{4})_{n}}(q^{2};q^{2})_{\infty}\\
                            & \equiv (q^{2};q^{2})_{\infty}\sum_{n \geq 0}\frac{q^{2n(n + 1)}}{(q;q^{2})_{n + 1} }\frac{(-q;q^{2})_{n}}{(q^{4};q^{4})_{n}}\\
                            & = \prod_{n \geq 1}(1 - q^{6n})(1 + q^{6n - 5})(1 + q^{6n - 1}) \,\,\,(\text{by}\,\,\,\, \eqref{eq27}) \\
                            & \equiv \sum_{n = -\infty}^{\infty}q^{3n^2 + 2n}.
\end{align*}
\end{proof}
\noindent Let $c_{8}(n)$ denote the number of partitions of $n$ in which either
\begin{enumerate}
\item[(a)] all parts are even and distinct\\
 or
\item[(b)] there is the largest odd part $2j - 1$ which appears once, all positive odd integers $\leq j$ appear once or twice , all positive odd integers $> j$ appear once, even parts  
$\leq j$ are distinct, and those $> j$ are distinct and actually $\geq 2j + 2$ in size.
\end{enumerate}
\noindent We obtain the following theorem.
\begin{theorem}\label{part18}
For all $n\geq 0$,
\begin{equation*}
c_{8}(49n + r) \equiv 0 \pmod{2},
\end{equation*}
where $r = 6,20,27,34,41,48.$
\end{theorem}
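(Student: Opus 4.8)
The plan is to follow the same template used for Theorems \ref{part32}--\ref{part27}: encode $c_8$ as a $q$-series, reduce it modulo $2$, collapse it to a Rogers--Ramanujan type sum via one of Slater's identities, and then read off the congruences from the exponents. First I would translate the combinatorial description into
\[\sum_{n\ge0}c_8(n)q^n=\sum_{j\ge0}q^{j^2}(-q;q)_j(-q^{2j+2};q^2)_\infty,\]
where the forced odd parts $1,3,\dots,2j-1$ supply $q^{j^2}$, the optional second copies of the odd parts $\le j$ together with the distinct even parts $\le j$ give $(-q;q)_j$, and the distinct even parts $\ge 2j+2$ give $(-q^{2j+2};q^2)_\infty$; the case (a) of even distinct parts is exactly the $j=0$ term. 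Pulling out $(-q^2;q^2)_\infty$ rewrites this as $(-q^2;q^2)_\infty\sum_{j\ge0}q^{j^2}(-q;q)_j/(-q^2;q^2)_j$.

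Next I would pass to residues modulo $2$, using $(-a;q)_k\equiv(a;q)_k\pmod2$ and the factorisation $(q^2;q^2)_j=(q;q)_j(-q;q)_j$, to arrive at
\[\sum_{n\ge0}c_8(n)q^n\equiv(q^2;q^2)_\infty\sum_{j\ge0}\frac{q^{j^2}}{(q;q)_j}\pmod 2.\]
Writing $(q^2;q^2)_\infty\equiv(q;q)_\infty^2\pmod2$ and applying \eqref{eq18} to the factor $(q;q)_\infty\sum_j q^{j^2}/(q;q)_j$ would collapse the right side to
\[\sum_{n\ge0}c_8(n)q^n\equiv(q;q)_\infty\prod_{n\ge1}(1-q^{5n})(1-q^{5n-2})(1-q^{5n-3})\pmod2.\]
I would then expand both factors as theta series: Euler's pentagonal number theorem gives $(q;q)_\infty\equiv\sum_k q^{k(3k-1)/2}\pmod2$, and \eqref{jacobi} (with $q\mapsto q^5$ and $z=-q^{-2}$) turns the triple product into $\sum_m q^{(5m^2+m)/2}$ modulo $2$. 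Hence $c_8(N)$ is congruent modulo $2$ to the number of integer pairs $(k,m)$ with $k(3k-1)/2+(5m^2+m)/2=N$.

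The heart of the argument, and the step I expect to be the main obstacle, is the modular analysis of this representation count. Setting $u=6k-1$ and $v=10m+1$ and multiplying $2N=3k^2-k+5m^2+m$ by $60$ converts the relation into the binary quadratic form equation $5u^2+3v^2=120N+8$. Since $120\equiv22\pmod{49}$, for $N\equiv 6,20,27,34,41,48\pmod{49}$ one finds $120N+8\equiv 42,7,14,21,28,35\pmod{49}$ respectively, i.e.\ a nonzero multiple of $7$ modulo $49$. It then remains to show $5u^2+3v^2$ never attains such a value: modulo $7$ the congruence $5u^2+3v^2\equiv0$ forces $u^2\equiv 5v^2\pmod7$, and because $5$ is a quadratic nonresidue modulo $7$ this compels $u\equiv v\equiv0\pmod7$, whence $5u^2+3v^2\equiv0\pmod{49}$, contradicting $120N+8\not\equiv0\pmod{49}$. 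Thus the equation has no solutions for these residues, the representation count is $0$, and so $c_8(49n+r)\equiv0\pmod2$ for $r=6,20,27,34,41,48$. The delicate points are pinning down the generating function precisely and carrying out the quadratic-residue bookkeeping modulo $49$ without arithmetic slips; note that unlike the earlier theorems this one runs through the modulus-$5$ identity \eqref{eq18} and a convolution of two theta functions, so the final congruences emerge from the unsolvability of $5u^2+3v^2$, not from a single exponent missing a residue class.
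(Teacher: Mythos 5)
Your proposal is correct, and its first two-thirds coincide exactly with the paper's proof: the same generating function $\sum_{j\geq 0} q^{j^{2}}(-q;q)_{j}(-q^{2j+2};q^{2})_{\infty}$, the same reduction modulo $2$ to $(q^{2};q^{2})_{\infty}\sum_{j\geq 0}q^{j^{2}}/(q;q)_{j}$, the same appeal to \eqref{eq18}, and the same expansion of the result as the product of the two theta series $\sum_{k}q^{k(3k-1)/2}$ and $\sum_{m}q^{(5m^{2}+m)/2}$. Where you genuinely diverge is the endgame. The paper splits each theta series by the parity of its summation index, tabulates the $22$ residues modulo $49$ attained by each family of exponents, and verifies by inspection that no sum of one residue from the first list and one from the second lies in $\{6,20,27,34,41,48\}$ modulo $49$. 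You instead complete the square (setting $u=6k-1$, $v=10m+1$) to convert the representation problem into the quadratic-form equation $5u^{2}+3v^{2}=120N+8$, note that the six target residues are precisely those for which $120N+8$ is a nonzero multiple of $7$ modulo $49$ (your values $42,7,14,21,28,35$ check out), and then eliminate all solutions by the quadratic-residue argument: $5u^{2}+3v^{2}\equiv 0 \pmod{7}$ forces $u\equiv v\equiv 0\pmod{7}$ because $5$ is a nonresidue modulo $7$, whence $49$ divides $5u^{2}+3v^{2}$, contradicting $120N+8\not\equiv 0\pmod{49}$. Your finish buys two things the paper's does not: it is essentially immune to arithmetic slips (no $22\times 22$ table of residue sums to scan), and it explains \emph{why} these particular six residues appear --- they are exactly the classes that $N\mapsto 22N+8$ sends to the nonzero multiples of $7$ modulo $49$. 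What the paper's enumeration buys in exchange is complete elementarity: it needs no facts about quadratic residues, only finite computation. Both arguments are sound, and your quadratic-form route is the more conceptual and more standard mechanism behind congruences of this type.
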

\begin{proof}
It is not difficult to see that
\begin{align*}
\sum_{n\geq 0}c_{8}(n)q^{n}  & = (-q^{2};q^{2})_{\infty} + \sum_{n \geq 1}q^{1 + 3 + 5 + \ldots + 2n - 1}(1 + q)(1 + q^{2})\ldots (1 + q^{n})(-q^{2n + 2};q^{2})_{\infty} \\
                       & = \sum_{n \geq 0}q^{n^{2}}(-q;q)_{n}(-q^{2n + 2};q^{2})_{\infty} \\
                       & = \sum_{n \geq 0}q^{n^{2}}\frac{(q;q)_n}{(q;q)_n}(-q;q)_{n}(-q^{2n + 2};q^{2})_{\infty} \\
                       & \equiv \sum_{n \geq 0}q^{n^{2}}\frac{(q^{2};q^{2})_n}{(q;q)_n}(q^{2n + 2};q^{2})_{\infty} \\
                       & = (q^{2};q^{2})_{\infty}\sum_{n \geq 0}\frac{q^{n^{2}}}{(q;q)_n}\\
                       & \equiv (q;q)_{\infty} \prod_{n \geq 1}(1 - q^{5n})(1 + q^{5n - 2})(1 + q^{5n - 3}) \,\,\,(\text{by}\,\,\,\, \eqref{eq18}) \\
                      & = \sum_{n = -\infty}^{\infty}q^{n(3n + 1)/2} \sum_{n = -\infty}^{\infty}q^{n(5n + 1)/2} \\
                      & = \sum_{n = -\infty}^{\infty}\left(q^{n(6n + 1)}  + q^{(2n + 1)(3n + 2)}\right) \sum_{n = -\infty}^{\infty}\left(q^{n(10n + 1)} + q^{(2n + 1)(5n + 3)}\right).  
\end{align*}
\noindent Since the exponents in $ \sum\limits_{n =- \infty}^{\infty}\left(q^{n(6n + 1)}  + q^{(2n + 1)(3n + 2)}\right)$ are congruent to \\
$$ 0, 1, 2, 5, 7, 8, 12, 14, 15, 19, 21, 22, 26,28,29, 33,35,36,40,42,43,47$$ modulo 49 and  the exponents in $\sum\limits_{n = -\infty}^{\infty}\left(q^{n(10n + 1)} + q^{(2n + 1)(5n + 3)}\right)$ are  congruent to 
$$ 0 ,2, 3, 7, 9,  10, 11, 14, 16,  17, 21, 23, 24, 28, 30, 31, 35, 37, 38, 42, 44, 45$$ modulo 49, it follows that the product \\
$\sum\limits_{n = -\infty}^{\infty}\left(q^{n(6n + 1)}  + q^{(2n + 1)(3n + 2)}\right) \sum\limits_{n = -\infty}^{\infty}\left(q^{n(10n + 1)} + q^{(2n + 1)(5n + 3)}\right) $ has no exponent congruent to  6, 20, 27, 34, 41, 48 modulo 49. Thus
\begin{equation*}
\sum_{n \geq 0}^{\infty}c_{8}(49n + r)q^{49n + r} \equiv 0 \pmod{2} 
\end{equation*}
where $r = 6, 20, 27, 34, 41, 48$.
\end{proof}
\noindent Let $c_{9}(n)$ be the number of partitions of $n$ in which either
\begin{enumerate}
\item[(a)] all parts are distinct and greater than 1 \\
or
\item[(b)]  there exists $j \geq 2$ such that 1 appears exactly $j^{2}$ times and  parts $> 1$ are at least  $j + 1$ in size and distinct.
\end{enumerate}
Then, we have:
\begin{theorem}\label{part27}
For all $n\geq 0$,
$$ c_{9}(n)
\equiv \begin{cases}
1\pmod{2}, & n = (5j^{2}+j)/2, j \in \mathbb{Z};\\\\
0\pmod{2}, & \text{otherwise}.
\end{cases}
$$
\end{theorem}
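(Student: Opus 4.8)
The plan is to follow the same template as the preceding theorems: convert the combinatorial description of $c_{9}$ into a generating function, reduce it modulo $2$ until it matches the left-hand side of one of Slater's identities, and then apply the Jacobi triple product \eqref{jacobi} to read off the parity. The target exponent $(5j^{2}+j)/2$ already signals that the relevant Slater identity is \eqref{eq18}, the analytic Rogers--Ramanujan identity of modulus $5$, exactly as in Theorem \ref{part18}.

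First I would write down the generating function. The type (a) partitions (distinct parts exceeding $1$) contribute $(-q^{2};q)_{\infty}$. For the type (b) partitions I would record the parts equal to $1$ through the parameter $j$ (contributing $q^{j^{2}}$) and the distinct parts $\geq j+1$ through $(-q^{j+1};q)_{\infty}$, so that these assemble into $\sum_{j\geq 2}q^{j^{2}}(-q^{j+1};q)_{\infty}$. The key bookkeeping observation is that adjoining the $j=0,1$ terms alters the total series only by the even quantity $2q(-q^{2};q)_{\infty}$, so that modulo $2$ the whole generating function becomes
\[ \sum_{n\geq 0}c_{9}(n)q^{n}\;\equiv\;\sum_{n\geq 0}q^{n^{2}}(-q^{n+1};q)_{\infty}\pmod{2}. \]

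Next I would carry out the modular reduction. Since $1-q^{k}\equiv 1+q^{k}\pmod{2}$, we have $(-q^{n+1};q)_{\infty}\equiv(q^{n+1};q)_{\infty}\pmod{2}$, and then $q^{n^{2}}(q^{n+1};q)_{\infty}=q^{n^{2}}(q;q)_{\infty}/(q;q)_{n}$, so summing over $n$ gives $\sum_{n\geq 0}c_{9}(n)q^{n}\equiv(q;q)_{\infty}\sum_{n\geq 0}q^{n^{2}}/(q;q)_{n}\pmod{2}$. By \eqref{eq18} this equals $\prod_{n\geq 1}(1-q^{5n})(1-q^{5n-2})(1-q^{5n-3})$. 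Applying \eqref{jacobi} with $q$ replaced by $q^{5}$ and $z=-q^{-2}$ turns this product into $\sum_{n=-\infty}^{\infty}(-1)^{n}q^{(5n^{2}+n)/2}$, which modulo $2$ is $\sum_{n=-\infty}^{\infty}q^{(5n^{2}+n)/2}$. Hence $c_{9}(n)$ is odd precisely when $n=(5j^{2}+j)/2$ for some $j\in\mathbb{Z}$ and even otherwise, as claimed.

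The main obstacle is the very first step: assembling the generating function so that, modulo $2$, it lands exactly on the left side of \eqref{eq18} with no spurious factor such as an extra $(q;q)_{\infty}$ or $1/(1-q)$ left over. In particular one must handle the parts equal to $1$ so as to produce the clean power $q^{n^{2}}$ rather than a geometric factor $q^{n^{2}}/(1-q)$, and one must verify that the boundary terms discarded in passing to the full sum $\sum_{n\geq 0}$ are genuinely even. Once the series is brought to the form $(q;q)_{\infty}\sum_{n\geq 0}q^{n^{2}}/(q;q)_{n}$, the remaining invocations of \eqref{eq18} and \eqref{jacobi} are routine and mirror the end of Theorem \ref{part18}.
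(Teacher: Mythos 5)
Your proposal is correct and follows essentially the same route as the paper's own proof: the same generating function $(-q^{2};q)_{\infty}+\sum_{j\geq 2}q^{j^{2}}(-q^{j+1};q)_{\infty}$, the same observation that adjoining the $j=0,1$ terms changes the series by the even quantity $2q(-q^{2};q)_{\infty}$, the same reduction to $(q;q)_{\infty}\sum_{n\geq 0}q^{n^{2}}/(q;q)_{n}$, and the same appeal to \eqref{eq18} followed by \eqref{jacobi}. The only cosmetic difference is that you make the triple-product substitution ($q\mapsto q^{5}$, $z=-q^{-2}$) explicit where the paper leaves it implicit.
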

\begin{proof}
We have:
\begin{align*}
\sum_{n = 0}^{\infty} c_{9}(n)q^{n} & = (-q^{2};q)_{\infty} + \sum_{n = 2}^{\infty} q^{n^{2}}(-q^{n+1};q)_{\infty} \\
& \equiv (1+2q)(-q^{2};q)_{\infty} + \sum_{n = 2}^{\infty} q^{n^{2}}(-q^{n+1};q)_{\infty} \\
& = (1+q)(-q^{2};q)_{\infty} + q(-q^{2};q)_{\infty} + \sum_{n = 2}^{\infty} q^{n^{2}}(-q^{n+1};q)_{\infty} \\
& = (-q;q)_{\infty} + q(-q^{2};q)_{\infty} + \sum_{n = 2}^{\infty} q^{n^{2}}(-q^{n+1};q)_{\infty} \\
& = \sum_{n = 0}^{\infty} q^{n^{2}}(-q^{n+1};q)_{\infty} \\
& \equiv \sum_{n = 0}^{\infty} q^{n^{2}}(q^{n+1};q)_{\infty} \\
& = \sum_{n = 0}^{\infty} \dfrac{q^{n^{2}}(q;q)_{\infty}}{(q;q)_{n}} \\
& = (q;q)_{\infty} \sum_{n = 0}^{\infty} \dfrac{q^{n^{2}}}{(q;q)_{n}} \\
& \equiv \prod_{n=1}^{\infty} \left(1 + q^{5n-2}\right)\left(1 +q^{5n-3}\right)\left(1-q^{5n}\right) \,\,\,(\text{by}\,\,\,\, \eqref{eq18})\\
& \equiv \sum_{n = -\infty}^{\infty} q^{\frac{5n^{2}+n}{2}}.
\end{align*}
\end{proof}
\begin{example}
Consider $n = 11$.
\end{example}
The $c_{9}(11)$-parititions are:
$$  11, (9,2), (8,3), (7,4), (7,1^{4}), (6,5), (6,3,2), (5,4,2), (4,3,1^{4}) $$
and so
 $c_{9}(11) =9 \equiv 1 \pmod{2}$ . Indeed this is true since $11 =\frac{ 5(2)^{2} + 2}{2} ( j = 2)$ in the theorem.

\noindent Let $c_{10}(n)$ be the number of partitions of $n$ in which either
\begin{enumerate}
\item[(a)] all parts are distinct  \\
or
\item[(b)] there exists $j \geq 1$ such that all positive odd integers $\leq j$ appear twice or thrice and other odd parts are distinct, all positive even integers $\leq j$ appear twice, even parts $>2j$ are distinct and no even integer in the interval $[j + 1, 2j]$ appears.
\end{enumerate}
Then, we have:
\begin{theorem}\label{part27}
For all $n\geq 0$,
$$ c_{10}(n)
\equiv \begin{cases}
1\pmod{2}, & n = 2j^{2}+ j, j \in \mathbb{Z};\\\\
0\pmod{2} & \text{otherwise}.
\end{cases}
$$
\end{theorem}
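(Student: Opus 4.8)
The plan is to follow the same template as the preceding theorems: write down the generating function $\sum_{n\geq 0}c_{10}(n)q^n$ directly from the combinatorial description, manipulate it modulo $2$ into the left-hand side of one of Slater's identities, and finally read off the parity from a theta series produced by Jacobi's triple product \eqref{jacobi}. The guiding observation is that the target exponents $2j^2+j$ are exactly those obtained when the \emph{product} side of \eqref{eq7} is reduced modulo $2$, so \eqref{eq7} is the identity to aim for, with its summand exponent $n(n+1)$ matching the minimal weight that case (b) will force.

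First I would build the generating function. In case (b), fixing the threshold $j\geq 1$, every integer $1,2,\dots,j$ is forced to occur at least twice, contributing the minimal weight $2(1+2+\cdots+j)=j(j+1)$ and hence a base factor $q^{j(j+1)}$. On top of this base, each odd $m\leq j$ may acquire a third copy (factor $1+q^m$) while each odd $m>j$ may appear once (factor $1+q^m$), so the odd parts together contribute $\prod_{m\ \mathrm{odd}}(1+q^m)=(-q;q^2)_\infty$; the even parts $\leq j$ are frozen at multiplicity two, the even integers in the window up to $2j$ are forbidden, and the even parts $>2j$ are distinct, contributing $(-q^{2j+2};q^2)_\infty$. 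Thus case (b) for fixed $j$ has generating function $q^{j(j+1)}(-q;q^2)_\infty(-q^{2j+2};q^2)_\infty$. A clean sanity check is that $j=0$ gives $(-q;q^2)_\infty(-q^2;q^2)_\infty=(-q;q)_\infty$, which is exactly the generating function for case (a) (distinct parts); this lets me absorb (a) into the $j=0$ term and write
$$\sum_{n\geq 0}c_{10}(n)q^n=(-q;q^2)_\infty\sum_{j\geq 0}q^{j(j+1)}(-q^{2j+2};q^2)_\infty.$$

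Next I would simplify. Writing $(-q^{2j+2};q^2)_\infty=(-q^2;q^2)_\infty/(-q^2;q^2)_j$ and pulling the infinite products out gives $(-q;q)_\infty\sum_{j\geq 0}q^{j(j+1)}/(-q^2;q^2)_j$. Reducing modulo $2$, via $(-q;q)_\infty\equiv(q;q)_\infty$ and $(-q^2;q^2)_j\equiv(q^2;q^2)_j$, turns this into $(q;q)_\infty\sum_{j\geq 0}q^{j(j+1)}/(q^2;q^2)_j$, which is precisely the left-hand side of \eqref{eq7}. Applying \eqref{eq7} and reducing the product modulo $2$ yields $\prod_{n\geq 1}(1+q^{4n})(1+q^{4n-1})(1+q^{4n-3})$. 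Specializing Jacobi's triple product \eqref{jacobi} with $q\mapsto q^4$ and $z=q^{-1}$ identifies this product with $\sum_{n=-\infty}^{\infty}q^{2n^2+n}$, so $c_{10}(n)\equiv1\pmod 2$ exactly when $n=2j^2+j$ for some $j\in\mathbb{Z}$, as claimed.

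The step I expect to be the main obstacle is the very first one: translating the intricate list of conditions in (b) into the single clean factor $q^{j(j+1)}(-q;q^2)_\infty(-q^{2j+2};q^2)_\infty$. One must check that the base weight really is $j(j+1)$, that the optional third copy for small odd parts together with the distinctness of large odd parts collapse into the one product $(-q;q^2)_\infty$, that the even-part gap condition (no even part strictly between $j$ and $2j+2$) is exactly what makes the free even parts contribute $(-q^{2j+2};q^2)_\infty$, and that the threshold $j$ is uniquely determined by each partition so that no overcounting occurs. Once this generating function is secured, the remaining manipulations are routine and run exactly parallel to the proofs of Theorems \ref{part32} and \ref{part18}.
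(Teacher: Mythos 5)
Your proposal is correct and takes essentially the same approach as the paper: the identical generating function $\sum_{j\geq 0}q^{j(j+1)}(-q;q^{2})_{\infty}(-q^{2j+2};q^{2})_{\infty}$ (with case (a) absorbed as the $j=0$ term), the same reduction modulo $2$ to $(q;q)_{\infty}\sum_{j\geq 0}q^{j(j+1)}/(q^{2};q^{2})_{j}$, and the same appeal to \eqref{eq7} followed by Jacobi's triple product \eqref{jacobi} to obtain $\sum_{n=-\infty}^{\infty}q^{2n^{2}+n}$. The only difference is cosmetic: you rearrange the infinite products before reducing modulo $2$, while the paper reduces first and rearranges afterwards.
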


\begin{proof}
The generating function for $c_{10}(n)$ is
\begin{align*}
\sum_{n = 0}^{\infty} c_{10}(n)q^{n} & = \sum_{n = 0}^{\infty} q^{n(n+1)}(-q^{2n+2};q^{2})_{\infty}(-q;q^{2})_{\infty} \\
& \equiv (q;q^{2})_{\infty} \sum_{n = 0}^{\infty} q^{n(n+1)}(q^{2n+2};q^{2})_{\infty} \,\, \pmod{2} \\
& = (q;q^{2})_{\infty} \sum_{n = 0}^{\infty} \dfrac{q^{n(n+1)}(q^{2};q^{2})_{\infty}}{(q^{2};q^{2})_{n}} \\
& = (q;q)_{\infty} \sum_{n = 0}^{\infty} \dfrac{q^{n(n+1)}}{(q^{2};q^{2})_{n}} \\
& = \prod_{n=1}^{\infty} \left(1-q^{4n-1}\right)\left(1-q^{4n-3}\right)\left(1-q^{4n}\right)  \,\,\,(\text{by}\,\,\,\, \eqref{eq7}) \\
& \equiv \sum_{n = -\infty}^{\infty} q^{2n^{2}+n}.
\end{align*}
 \end{proof}
\noindent Let $\tilde{c}_{11}(n)$ be the number of partitions of $n$ in which there is $j\geq 1$ such that 1 appears exactly $j^{2}$, odd parts $> 1$ appear unrestricted, even parts are $>2j$ and distinct. Define $c_{11}(n)$ as follows:
$c_{11}(n) = \sum\limits_{i = 0}^{n}\tilde{c}_{11}(i)$. Then 
$$ \sum_{n = 0}^{\infty}\tilde{c}_{11}(n)q^{n} = \sum_{n = 1}^{\infty} \dfrac{q^{n^{2}}(-q^{2n+2};q^{2})_{\infty}}{(q^{3};q^{2})_{\infty}}$$ and 
\begin{align*}
\sum_{n = 0}^{\infty} c_{11}(n)q^{n} & =\frac{1}{1 - q} \sum_{n = 0}^{\infty}\tilde{c}_{11}(n)q^{n} \\
& = \sum_{n = 1}^{\infty} \dfrac{q^{n^{2}}(-q^{2n+2};q^{2})_{\infty}}{(q;q^{2})_{\infty}} \\
& \equiv \sum_{n = 1}^{\infty} \dfrac{q^{n^{2}}(q^{2};q^{2})_{\infty}}{(q^{2};q^{2})_{n}(q;q^{2})_{\infty}} \,\, \pmod{2} \\
& = \sum_{n = 1}^{\infty} \dfrac{q^{n^{2}}(q^{2};q^{2})_{\infty}(q;q^{2})_{n}}{(q^{2};q^{2})_{n}(q;q^{2})_{\infty}(q;q^{2})_{n}} \\
& \equiv \sum_{n = 1}^{\infty} \dfrac{q^{n^{2}}(q^{2};q^{2})_{\infty}(-q;q^{2})_{n}}{(q^{2};q^{2})_{n}(-q;q^{2})_{\infty}(q;q^{2})_{n}} \\
& \equiv  \frac{  (q^{2};q^{2})_{\infty} }{  (-q;q^{2})_{\infty} }\sum_{n = 0}^{\infty} \dfrac{q^{n^{2}}(-q;q^{2})_{n}}{(q^{2};q^{2})_{n}(q;q^{2})_{n}}  - \frac{(-q^{2};q^{2})_{\infty}}{(q;q^{2})_{\infty}} \\
&  \equiv  \frac{  (q^{2};q^{2})_{\infty} }{  (-q;q^{2})_{\infty} } \sum_{n = 0}^{\infty} \dfrac{q^{n^{2}}(-q;q^{2})_{n}}{(q;q)_{2n}}  + (q;q)_{\infty}^{3} \\
& \equiv \prod_{n=1}^{\infty} \left(1 + q^{6n-2}\right)\left(1 + q^{6n-4}\right)\left(1-q^{6n}\right) +  \sum_{n = 0}^{\infty}q^{n(n + 1)/2} \,\,\,(\text{by}\,\,\,\, \eqref{eq29}) \\
& \equiv \sum_{n = -\infty}^{\infty} q^{3n^{2}+n} + \sum_{n = 0}^{\infty}q^{n(n + 1)/2}.
\end{align*}

\noindent The series on the right-hand side is the same as the series expansion for $\sum\limits_{n = 0}^{\infty}c_{3}(n)q^{n}$  and so Theorem \ref{same0} is valid for $c_{11}(n)$, i.e.
\begin{equation*}
c_{11}(11n + 5) \equiv 0 \pmod{2},
\end{equation*}
\begin{equation*}
c_{11}(11n + 7) \equiv 0 \pmod{2}
\end{equation*}
and
\begin{equation*}
c_{11}(11n + 9) \equiv 0 \pmod{2}.
\end{equation*}

\noindent Let $c_{12}(n)$ denote the number of partitions of $n$ in which either
\begin{enumerate}
\item[(a)] all parts are distinct and greater than or equal to 2\\
 or
\item[(b)] the largest  repeated part $j$ appears exactly  three times  if $j \equiv 0  \pmod{2}$, and appears three or four times  if  $j \equiv 1 \pmod{2}$, all positive even integers  $< j$ appear exactly twice, all positive odd integers $< j$ appear two or three times, all even parts $>j$ are actually at least $2j + 2$ in part size and distinct, odd parts $> j$ are distinct and no odd part is equal to $2j+1$. 
\end{enumerate}
We have:
\begin{theorem}\label{part27}
For all $n\geq 0$,
$$c_{12}(n)
\equiv \begin{cases}
1\pmod{2}, & n = 6j^2 + 4j, j \in \mathbb{Z};\\\\
0\pmod{2}, & \text{otherwise}.
\end{cases}
$$
\end{theorem}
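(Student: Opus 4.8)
The plan is to mirror the method of the clean cases already treated (for instance Theorem~\ref{part32} and the theorems for $c_5,c_6,c_7$): produce the generating function for $c_{12}(n)$, reduce it modulo $2$ to the left-hand side of one of Slater's identities, invoke that identity, and then apply the Jacobi triple product \eqref{jacobi} to the resulting infinite product in order to read off the parity.

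First I would transcribe the two-part combinatorial description into a generating function. Writing $j$ for the threshold part (the largest part occurring three or more times), part (a) supplies the base term and part (b) a sum over $j$; the even/odd multiplicity constraints below the threshold contribute finite products such as $(-q;q^{2})_{n}$ together with the power $q^{n(n+2)}$, the unbounded behaviour of the admissible odd part contributes a factor $1/(1-q^{2n+1})$, and the distinct parts exceeding the threshold contribute tails of the shape $(-q^{\,\cdot};q^{2})_{\infty}$. After fusing the part-(a) term with the $n=0$ summand so that the series may be started from $n=0$, and using repeatedly $(1+q^{k})\equiv(1-q^{k})\pmod 2$ (that is, $(-a;q)_{\infty}\equiv(a;q)_{\infty}$) together with the routine Pochhammer simplifications $(q;q)_{2n+1}=(q;q^{2})_{n+1}(q^{2};q^{2})_{n}$ and the absorption of the infinite tails into a single prefactor $(q;q)_{\infty}$, exactly as in the proof of Theorem~\ref{part32}, the expression should collapse to
\begin{equation*}
\sum_{n\ge 0}c_{12}(n)q^{n}\equiv (q;q)_{\infty}\sum_{n=0}^{\infty}\frac{q^{n(n+2)}(-q;q^{2})_{n}}{(q;q)_{2n+1}}\pmod 2 .
\end{equation*}

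The right-hand side is now precisely the left-hand side of Slater's identity \eqref{eq50}, so applying \eqref{eq50} gives
\begin{equation*}
\sum_{n\ge 0}c_{12}(n)q^{n}\equiv \prod_{n=1}^{\infty}(1-q^{12n})(1-q^{12n-2})(1-q^{12n-10})\pmod 2 .
\end{equation*}
Finally I would feed this product into the Jacobi triple product \eqref{jacobi} with $q$ replaced by $q^{12}$ and $z=-q^{-2}$, which yields
\begin{equation*}
\prod_{n=1}^{\infty}(1-q^{12n})(1-q^{12n-2})(1-q^{12n-10})=\sum_{n=-\infty}^{\infty}(-1)^{n}q^{6n^{2}+4n}\equiv\sum_{n=-\infty}^{\infty}q^{6n^{2}+4n}\pmod 2 ,
\end{equation*}
so that $c_{12}(n)\equiv 1\pmod 2$ exactly when $n=6j^{2}+4j$ for some $j\in\mathbb{Z}$, and $c_{12}(n)\equiv 0$ otherwise, as claimed.

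The genuinely delicate step is the first one: transcribing the intricate conditions of part (b) — the threshold part appearing three times when even and three or four times when odd, even parts below the threshold appearing exactly twice, odd parts below it twice or thrice, the gap pushing super-threshold even parts up to at least $2j+4$, and the distinctness of parts above the threshold — into the correct product, and verifying that the part-(a) contribution is precisely the missing $n=0$ term so that the sum can be closed up from $n=0$. Once the generating function has been shown to reduce modulo $2$ to the left side of \eqref{eq50}, the remaining manipulations (the sign flips, the application of \eqref{eq50}, and the Jacobi triple product evaluation) are entirely routine and identical in spirit to the earlier theorems.
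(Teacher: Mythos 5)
Your concluding steps are correct and coincide with the paper's: once one has
$(q;q)_{\infty}\sum_{n\ge0}q^{n(n+2)}(-q;q^{2})_{n}/(q;q)_{2n+1}$,
Slater's identity \eqref{eq50} followed by the triple product \eqref{jacobi} with $q\mapsto q^{12}$, $z=-q^{-2}$ indeed produces $\sum_{n=-\infty}^{\infty}(-1)^{n}q^{6n^{2}+4n}$, and mod $2$ the signs disappear. But the step you defer as ``delicate'' is exactly where the argument breaks, and it cannot be repaired. Your transcription inserts a factor $1/(1-q^{2n+1})$, attributed to ``the unbounded behaviour of the admissible odd part'', yet the definition of $c_{12}$ contains no such part: in case (b) every part above the pivot $j$, odd or even, is required to be \emph{distinct}. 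The faithful generating function is
\begin{equation*}
\sum_{n\ge0}c_{12}(n)q^{n}=(-q;q^{2})_{\infty}\sum_{n\ge0}q^{n(n+2)}(-q^{2n+4};q^{2})_{\infty},
\end{equation*}
and reducing it modulo $2$ gives $(q;q)_{\infty}\sum_{n\ge0}q^{n(n+2)}/(q^{2};q^{2})_{n+1}$. To identify this with the left side of \eqref{eq50} one would need $(q;q^{2})_{n}(q^{2};q^{2})_{n+1}=(q;q)_{2n+1}$, which is false: the left side equals $(q;q)_{2n}(1-q^{2n+2})$, whereas $(q;q)_{2n+1}=(q;q)_{2n}(1-q^{2n+1})$; the correct factorization is $(q;q)_{2n+1}=(q;q^{2})_{n+1}(q^{2};q^{2})_{n}$. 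So the congruence you assert, $\sum c_{12}(n)q^{n}\equiv(q;q)_{\infty}\sum q^{n(n+2)}(-q;q^{2})_{n}/(q;q)_{2n+1}\pmod 2$, does not follow from the definition of $c_{12}$, and in fact it is false.

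The discrepancy is fatal to the statement itself, not just to the method: $6j^{2}+4j=2j(3j+2)$ is always even, so the theorem (and the corollary after it) forces $c_{12}(n)$ to be even for every odd $n$; but the partition $(1)$ is admissible under case (a), so $c_{12}(1)=1$. Likewise $c_{12}(2)=0$ although $2=6(-1)^{2}+4(-1)$, and $c_{12}(4)=3$ (the partitions $(4)$, $(3,1)$, $(1^{4})$) although $4\ne 6j^{2}+4j$. You should be aware that the paper's own proof commits precisely the Pochhammer error displayed above, silently replacing $(q;q^{2})_{n}(q^{2};q^{2})_{n+1}$ by $(q;q)_{2n+1}$, so your plan mirrors the paper, flaw included. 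Identity \eqref{eq50} would govern a \emph{modified} $c_{12}$ in which even parts above the pivot are only required to be at least $2j+2$ and the single odd integer $2j+1$ may repeat arbitrarily; that is where a factor $1/(1-q^{2n+1})$ legitimately arises (compare $c_{6}$ and $c_{7}$), and for that function your outline would go through verbatim. As a proof of the stated theorem, however, your proposal leaves unverified the one step that is actually false.
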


We have
\begin{align*}
\sum_{n = 0}^{\infty} c_{12}(n)q^{n} & = (-q^{2};q)_{\infty} + \sum_{n = 1}^{\infty} q^{1 + 1 + 2 + 2 + \cdots + (n - 1) + (n - 1) + n + n + n}(-q;q^{2})_{n}(-q^{2n+2};q)_{\infty} \\
& =  \sum_{n = 0}^{\infty} q^{n(n + 2)}(-q;q^{2})_{n}(-q^{2n+2};q)_{\infty} \\
&  \equiv \sum_{n = 0}^{\infty} q^{n(n+2)}(q;q^{2})_{n}(q^{2n+2};q^{2})_{\infty}(q^{2n + 3};q^{2})_{\infty})\\
& \equiv \sum_{n = 0}^{\infty} \frac{q^{n(n+2)}(q^{2n+2};q^{2})_{\infty}(q;q^{2})_{\infty}}{(1-q^{2n+1})}\\
& = (q;q^{2})_{\infty} \sum_{n = 0}^{\infty} \dfrac{q^{n(n+2)}(q^{2};q^{2})_{\infty}}{(q^{2};q^{2})_{n}(1-q^{2n+1})} \\
& = (q;q)_{\infty} \sum_{n = 0}^{\infty} \dfrac{q^{n(n+2)}}{(q^{2};q^{2})_{n}(1-q^{2n+1})} \\
& = (q;q)_{\infty} \sum_{n = 0}^{\infty} \dfrac{q^{n(n+2)}(q;q^{2})_{n}}{(q;q^{2})_{n}(q^{2};q^{2})_{n}(1-q^{2n+1})} \\
& \equiv (q;q)_{\infty} \sum_{n = 0}^{\infty} \dfrac{q^{n(n+2)}(-q;q^{2})_{n}}{(q;q^{2})_{n+1}(q^{2};q^{2})_{n}} \\
 & = (q;q)_{\infty} \sum_{n = 0}^{\infty} \dfrac{q^{n(n+2)}(-q;q^{2})_{n}}{(q;q)_{2n+1}} \\
& = \prod_{n=1}^{\infty} \left(1-q^{12n-2}\right)\left(1-q^{12n-10}\right)\left(1-q^{12n}\right) \,\,\,(\text{by}\,\,\,\, \eqref{eq50}) \\
& \equiv \sum_{n = -\infty}^{\infty} q^{6n^{2}+4n}.
\end{align*}
\noindent The following corollary is immediately noticeable.
\begin{corollary}
For all $n\geq 0$, we have,
$$c_{12}(2n + 1) \equiv 0 \pmod{2}.$$
\end{corollary}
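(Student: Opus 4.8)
The plan is to read off the result directly from the parity characterization of $c_{12}(n)$ already established in the preceding theorem, without any new generating-function work. By that theorem we have the congruence
$$\sum_{n=0}^{\infty} c_{12}(n)\, q^{n} \equiv \sum_{n=-\infty}^{\infty} q^{6n^{2}+4n} \pmod{2},$$
so that $c_{12}(n)$ is odd precisely when $n$ can be written as $6j^{2}+4j$ for some $j \in \mathbb{Z}$, and is even otherwise. Thus the entire question reduces to understanding the parity of the exponents $6j^{2}+4j$; I would extract the corollary purely by inspecting which residues these exponents can hit.

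The key step is the elementary observation that every exponent is even: for all $j \in \mathbb{Z}$ one has $6j^{2}+4j = 2\,(3j^{2}+2j)$, and $3j^{2}+2j$ is an integer, so $6j^{2}+4j \equiv 0 \pmod{2}$. Consequently no odd integer $2n+1$ admits a representation of the form $6j^{2}+4j$. Comparing the coefficient of $q^{2n+1}$ on both sides of the displayed congruence, the right-hand side contributes nothing in odd degrees, and therefore $c_{12}(2n+1) \equiv 0 \pmod{2}$ for every $n \geq 0$.

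I do not anticipate any genuine obstacle here; the statement is an immediate parity check on the quadratic form $6j^{2}+4j$, which is why the authors flag it as ``immediately disposable.'' The only point worth verifying is that the factor $2$ in $2(3j^{2}+2j)$ is authentic, i.e.\ that $3j^{2}+2j \in \mathbb{Z}$, which is obvious; this guarantees that the exponents avoid all odd residues and so forces the vanishing of the odd-indexed coefficients modulo $2$.
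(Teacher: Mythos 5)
Your proposal is correct and matches the paper's intent exactly: the paper offers no separate argument (it calls the corollary ``immediately disposable''), and the implicit reasoning is precisely yours, namely that $c_{12}(n)$ is odd only for $n = 6j^{2}+4j = 2(3j^{2}+2j)$, which is always even, so odd arguments yield even values.
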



\end{document}